\theoremstyle{plain}
\newtheorem{thm}{Theorem}
\newtheorem{rem}[thm]{Remark}
\newtheorem{prop}[thm]{Proposition}
\newtheorem{lem}[thm]{Lemma}
\numberwithin{equation}{section} \numberwithin{thm}{section}
\begin{document}

\title[Control Of Sobolev Norms]{ On Control Of Sobolev
Norms For Some Semilinear Wave Equations With Localized Data}

\author{Tristan Roy}
\address{Nagoya University}
\email{tristanroy@math.nagoya-u.ac.jp}

\vspace{-0.3in}

\begin{abstract}

Consider the semilinear wave equations in dimension $3$ with a defocusing and superconformal power-type nonlinearity
and with data lying in the $H^{s} \times H^{s-1}$ ($s < 1$) closure of smooth functions that are compactly supported inside a ball with
fixed radius. We establish new bounds of the Sobolev norms  of the solution. In particular, we prove that the $H^{s}$ norm of the high frequency component of the solution grows like $T^{\sim (1-s)^{2}+}$ in a neighborhood of $s=1$. In order to do that, we perform an analysis in a neighborhood of the cone, using the finite speed of propagation, an almost Shatah-Struwe estimate \cite{shatstruwe}, an almost conservation law and a low-high frequency decomposition \cite{bourgbook,almckstt}.
\thanks{ The notation  $ \sim f(s) $ means that there exists $\alpha(s)$ defined in a neighborhood of $1$ such that  $\sim f(s) := \alpha(s) f(s)$
and $\lim_{s \rightarrow 1-} \alpha(s)  = C$ with $C >0$}
\end{abstract}

\maketitle

\section{Introduction}

In this paper we consider the semilinear wave equations on $\mathbb{R}^{3}$ with a defocusing power-type nonlinearity:

\begin{equation}
\begin{array}{ll}
\partial_{tt} u  - \Delta u  & =  - |u|^{p-1} u \\
\end{array}
\label{Eqn:NlWdat}
\end{equation}
with data $u(0):=u_{0}$, $\partial_{t}u(0):=u_{1}$.\\
The existence of smooth solutions of (\ref{Eqn:NlWdat}) for all time  has received a great deal of
attention from the community. This problem was addressed in \cite{jo} for subcritical powers (i.e $p<5$). The critical power (i.e $p=5$) was solved in \cite{rauch} for small data, in \cite{struwe} for large and radial data and in \cite{grill} for large and general data. No result is known
for the supercritical powers, i.e $p>5$. \\
The next step is to construct solutions of (\ref{Eqn:NlWdat}) with rougher data. In this paper we restrict ourselves to the subcritical and superconformal
exponents, i.e $ 3 \leq p <5$. It is known (see \cite{linsog}) that  (\ref{Eqn:NlWdat}) is locally well-posed in $H^{s} \times H^{s-1}$ for $s > \frac{3}{2} - \frac{2}{p-1}$. By that we mean that

\begin{itemize}

 \item given $(u_{0},u_{1}) \in H^{s} \times H^{s-1}$  there exists $T_{l}>0$ and a unique  $(u,\partial_{t} u)$ lying in a subspace of
 $\mathcal{C}([0,T_{l}],H^{s}) \times \mathcal{C}([0,T_{l}],H^{s-1})$ such that $u$ satisfies the Duhamel formula for all $t \in [0,T_{l}]$, i.e

\begin{equation}
\begin{array}{ll}
u(t) & = \cos{(t D)} u_{0} + \frac{\sin(t D)}{D} u_{1} - \int_{0}^{t} \frac{\sin \left(
(t-t^{'})  D \right)}{D} \left[  |u|^{p-1}(t^{'}) u(t') \right] \, dt^{'} \\
& := \Psi_{t}(u_{0},u_{1})
\end{array}
\label{Eqn:StrongSol}
\end{equation}

\item $(u_{0},u_{1})  \rightarrow \Psi_{t}(u_{0},u_{1}) $ is uniformly continuous in the $H^{s} \times H^{s-1}$ topology

\end{itemize}
Moreover the time of local existence $T_{l}$ depends on the size of the initial data, i.e $T_{l}:= \left( \| (u_{0},u_{1}) \|_{H^{s} \times H^{s-1}} \right)$. Here $H^{s}$ is the standard inhomogeneous Sobolev space i.e $H^{s}$ is the completion of the Schwartz space $\mathcal{S}(\mathbb{R}^{3})$ with respect to the norm

\begin{equation}
\begin{array}{ll}
\| f \|_{H^{s}} &  :=  \| \langle D \rangle^{s} f \|_{L^{2}(\mathbb{R}^{3})},
\end{array}
\nonumber
\end{equation}
where $ D $ is the operator defined by

\begin{equation}
\begin{array}{ll}
\widehat{\langle D \rangle f}(\xi) &  := ( 1 + |\xi|^{2})^{\frac{1}{2}} \hat{f}(\xi)
\end{array}
\nonumber
\end{equation}
and $\hat{f}$ denotes the Fourier transform, i.e

\begin{equation}
\begin{array}{ll}
\hat{f}(\xi) &  := \int_{\mathbb{R}^{3}} f(x) e^{-i x \cdot \xi} \, dx \cdot
\end{array}
\nonumber
\end{equation}
By the local well-posedness theory, the global behavior of $H^{s}$ solutions of (\ref{Eqn:NlWdat}) is closely related to the growth
of the Sobolev norms $\| (u(T),\partial_{t} u(T)) \|_{H^{s} \times H^{s-1}}$ for $T < T_{*}$ where $T_{*}$ is the maximal time of existence.
In particular, if one can find a finite bound of $\| (u(T),\partial_{t} u(T)) \|_{H^{s} \times H^{s-1}}$  for all time $T$, then one can
prove that the $H^{s}$ solutions of (\ref{Eqn:NlWdat}) exist for all time $T$. The equation (\ref{Eqn:NlWdat}) enjoys the following
energy conservation law

\begin{equation}
\begin{array}{ll}
E(u(t)) & := \frac{1}{2} \int_{\mathbb{R}^{3}} |\partial_{t} u(t,x)|^{2} \, dx + \frac{1}{2} \int_{\mathbb{R}^{3}} |\nabla u(t,x)|^{2} \, dx
+ \frac{1}{p+1} \int_{\mathbb{R}^{3}} |u(t,x)|^{p+1} \, dx
\end{array}
\label{Eqn:Nrj}
\end{equation}
Therefore, by using this energy conservation law, we immediately see that $H^{1}$-solutions of
(\ref{Eqn:NlWdat}) exist for all time. It remains to better understand the global behavior of $H^{s}$- solutions
of (\ref{Eqn:NlWdat}) if $s<1$. This question is delicate since there there is no known conservation law at these levels
of regularity. It has been studied in \cite{kenponcevega,gallagplanch,bahchemin,triroyrad,triroygen} (see \cite{miaozhang} for higher
dimensions). To our knowledge, the best results regarding the optimal index of regularity for which the solution exists for $p=3$ and
for all time $T$ are obtained in \cite{triroygen} for general data ($s>\frac{13}{18}$) and in \cite{triroyrad} for radial data ($s>\frac{7}{10}$).

The purpose of this paper is to improve the bounds of the $H^{s}$ norms of the solution for a class of rough
and localized data, that is, $(u_{0},u_{1}) \in Cl \left( \mathcal{C}^{\infty}_{c} (B(0,R)), H^{s} \right) \times
Cl \left( \mathcal{C}^{\infty}_{c}(B(0,R)), H^{s-1} \right)$, where $R>0$ is an arbitrary but fixed positive number and
$Cl \left( \mathcal{C}^{\infty}_{c}(B(0,R)), H^{s} \right) \times Cl \left(  \mathcal{C}^{\infty}_{c}(B(O,R)), H^{s-1} \right)$ is the closure of smooth
and compactly supported functions inside the ball $B(O,R):= \left\{ x \in \mathbb{R}^{3}: |x| < R   \right\}$ with respect
to the $H^{s} \times H^{s-1}$ topology. Our main result is \footnote{Here $P_{<1} f$ (resp. $P_{>1} f$) denotes the low frequency part (resp. the high frequency part) of a function $f$, i.e $\widehat{P_{<1}f}(\xi)  := \phi (\xi) \hat{f}(\xi)$, $\widehat{P_{>1}f}(\xi)
:=  \left( 1- \phi(\xi) \right) \hat{f}(\xi)$ with $\phi(\xi)$ a smooth, real, radial, nonincreasing function that is equal to one on
$B(O,1)$ and that is supported on $B(O,2)$. }:

\begin{thm}
Let $u$ be the solution of (\ref{Eqn:NlWdat}) with data $( u_{0},u_{1} ) \in
Cl \left( \mathcal{C}^{\infty}_{c} (B(0,R)), H^{s} \right) \times
Cl \left( \mathcal{C}^{\infty}_{c}(B(0,R)), H^{s-1} \right) $.  Let

\begin{equation}
\begin{array}{ll}
\theta & := \frac{4p-12}{(p-1)(7-p)}
\end{array}
\nonumber
\end{equation}
Then, if $1 > s > s_{p}:= 1 - \frac{(5-p)(1- \theta)}{2(p+1)} $ and
$T \geq 1$

\begin{equation}
\begin{array}{ll}
\| (P_{>1} u(T), \partial_{t}  u(T)) \|^{2}_{H^{s} \times H^{s-1}} & \lesssim
T^{ \frac{4(1-s)^{2} + }
{
\left(
\frac{2 \theta +  p-1 }{1-\theta} s - \left( \frac{2 \theta + p-1}{1- \theta} - \frac{5-p}{2}
\right) \right) \left(  \frac{5-p}{2} - \frac{(1-s)(p+1)}{1- \theta}   \right) }}
\end{array}
\label{Eqn:MainIneqHigh}
\end{equation}
and

\begin{equation}
\begin{array}{ll}
\| P_{<1} u(T) \|^{2}_{H^{s}} T^{-\frac{3p-5}{p+1}} & \lesssim
T^{\frac{4(1-s)+} {(p+1) \left( \frac{5-p}{2} - \frac{(1-s)(p+1)}{1- \theta}  \right)  }}
\end{array}
\label{Eqn:MainIneqLow}
\end{equation}


\label{Thm:Main}
\end{thm}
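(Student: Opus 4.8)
The plan is to combine Bourgain's high--low frequency method \cite{bourgbook} with the almost conservation law of the $I$-method \cite{almckstt}, the whole argument being driven by the finite speed of propagation afforded by the localized data. Fix a frequency threshold $N = N(T)$, to be optimized only at the very end, and introduce the smoothing Fourier multiplier $I = I_{N}$ which acts as the identity on frequencies $|\xi| \lesssim N$ and as $(N/|\xi|)^{1-s}$ on $|\xi| \gtrsim N$; thus $\|Iu\|_{H^{1}} \lesssim N^{1-s} \|u\|_{H^{s}}$, and $I$ is close to the identity with a gain $N^{-(1-s)+}$ in the genuinely high range. The object I would track is the modified energy $E(Iu(t))$, built from (\ref{Eqn:Nrj}) with $u$ replaced by $Iu$; differentiating and using the equation, its variation is the commutator $\int \partial_{t} Iu \, \big( I(|u|^{p-1}u) - |Iu|^{p-1} Iu \big) \, dx$, which carries the smallness factor $N^{-(1-s)+}$. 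Two features will make the quadratic exponent possible: the quadratic part of $E(Iu(t))$ controls $\|P_{>1} u(t)\|_{H^{s}}^{2} + \|\partial_{t} u(t)\|_{H^{s-1}}^{2}$, while $E(Iu(0))$ costs only a fixed power of $N$.

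The first substantive step is a \emph{decay estimate} valid at each time $t$. Since $(u_{0},u_{1})$ lies in the $H^{s} \times H^{s-1}$ closure of functions supported in $B(0,R)$, finite speed of propagation forces $u(t)$ to be supported in $B(0,R+t)$; working with smooth approximations and passing to the limit makes this localization available throughout. I would then analyze $u$ in a neighborhood of the light cone: on each slab of unit length I run an \emph{almost} Shatah--Struwe / Strichartz estimate \cite{shatstruwe} to bound the relevant spacetime norm of $u$ by the modified energy, and combine it with a local flux-type almost conservation law. The spreading of the support over the ball of radius $R+t \sim t$ then supplies a volume factor $\sim t^{3}$ which, through Hölder and Bernstein applied to the frequency-localized pieces, converts the bounded modified energy into genuine pointwise-in-time decay of the potential-energy density $\|u(t)\|_{L^{p+1}}^{p+1}$. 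This decay, whose rate is governed by $p$ through $\theta$, is precisely what renders the nonlinear increments summable over long times.

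With the decay estimate in hand I would establish the two conclusions separately. For the low-frequency bound (\ref{Eqn:MainIneqLow}) I insert the decay estimate into the Duhamel representation (\ref{Eqn:StrongSol}) projected by $P_{<1}$: the free terms $\cos(tD) u_{0}$ and $\frac{\sin(tD)}{D} u_{1}$ are harmless at low frequency --- the apparent singularity of $D^{-1}$ near $\xi = 0$ is tamed in dimension three by the compact support, which keeps the Fourier transforms bounded at the origin --- while the Duhamel integral of $|u|^{p-1}u$ is controlled by integrating the decay of $\|u(t')\|_{L^{p+1}}$ against the volume growth, yielding exactly the powers of $T$ displayed on the two sides of (\ref{Eqn:MainIneqLow}). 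For the high-frequency bound (\ref{Eqn:MainIneqHigh}) I would partition $[0,T]$ into subintervals on each of which the local theory applies, bound the per-slab increment of $E(Iu)$ by the commutator estimate fed with the decay estimate, and sum. Since $\|P_{>1} u(T)\|_{H^{s}}^{2} + \|\partial_{t} u(T)\|_{H^{s-1}}^{2} \lesssim E(Iu(T)) \lesssim E(Iu(0)) + \textrm{(total increment)}$, a final optimization of $N$ in terms of $T$ produces (\ref{Eqn:MainIneqHigh}).

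I expect the main obstacle to be the sharp bookkeeping in this final optimization. Recovering the quadratic exponent $4(1-s)^{2}+$, rather than the naive linear rate $1-s$, requires balancing three competing effects at once: the commutator gain $N^{-(1-s)}$ per unit of modified-energy increment, the rate of the decay estimate (which enters through $\theta$, the power $p$, and the volume growth $t^{3}$), and the number of iteration steps, which is itself tied to the local existence time and therefore lengthens as the modified energy grows. Arranging these so that the exponent of $T$ closes at the quadratic rate --- and checking that $s_{p} = 1 - \frac{(5-p)(1-\theta)}{2(p+1)}$ is exactly the regularity at which the scheme remains consistent --- is the delicate heart of the argument. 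Logically prior to all of this is the almost Shatah--Struwe estimate itself, which must be proved under the mere localization hypothesis in place of the usual finite-energy assumption; establishing it is the key technical prerequisite.
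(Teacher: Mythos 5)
There are two genuine gaps in your plan, and each one blocks a different half of the theorem. First, the mechanism you propose for the pointwise-in-time decay estimate does not work. You claim that the spreading of the support over $B(0,R+t)$ supplies a volume factor $t^{3}$ which, via H\"older and Bernstein, converts boundedness of the modified energy into decay of $\|u(t)\|_{L^{p+1}}^{p+1}$. H\"older with a large volume goes the wrong way here: it lets you pass from a decaying $L^{p+1}$ norm \emph{down} to $L^{2}$ (this is exactly how the paper derives (\ref{Eqn:MainIneqLow}), writing $\|I_{N_{0}}u(T)\|_{L^{2}(B(0,R'+T))}^{2}\lesssim T^{3(1-\frac{2}{p+1})}\|I_{N_{0}}u(T)\|_{L^{p+1}}^{2}$), but it cannot produce decay of the higher norm $L^{p+1}$ from bounded energy. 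The actual source of the $\frac{R}{R+t}$ decay in Proposition \ref{Prop:DecayPot} is the scaling multiplier $(t+R)\partial_{t}Iu+x\cdot\nabla Iu+Iu$ \`a la Shatah--Struwe: one writes $A_{1}=\partial_{t}P-\nabla\cdot Q$ and integrates over the truncated cone $K_{R}([a,b])$, the factor $t$ in front of $\frac{|Iu|^{p+1}}{p+1}$ inside $P$ being what yields the $1/t$ gain after division. Your sketch never identifies this identity, and without it there is no decay estimate. Related to this, you treat the localization of $u(t)$ in $B(0,R+t)$ as if it transferred to $Iu$; it does not, since $I$ is a Fourier multiplier and destroys compact support. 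Controlling the resulting flux and exterior terms requires the quantitative near-localization of $If$ (Proposition \ref{Prop:SpatSmooth}), which is a substantial technical component you have not accounted for.

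Second, your scheme uses a single parameter $N$ and a single mollified energy $E(I_{N}u)$, with the decay estimate and the almost conservation law both run at that scale. This is essentially the strategy of the earlier works and it yields only a linear exponent $T^{\sim(1-s)+}$, not the quadratic one claimed in (\ref{Eqn:MainIneqHigh}); the paper explicitly notes that the error in proving the decay estimate is harder to control than the error in the energy increment, and that both would then involve the same parameter. The key structural idea you are missing is the two-parameter scheme: a large $N_{0}$, constrained by (\ref{Eqn:ChoiceN0}) and fixed as a power of $T$, for which the decay (\ref{Eqn:DecayPot2}) of $\|I_{N_{0}}u(t)\|_{L^{p+1}}$ is proved by a double bootstrap; and a much smaller $N_{1}\sim N_{0}^{\,c(1-s)}\langle T\rangle^{+}$ as in (\ref{Eqn:ChoiceN1}), whose energy $E(I_{N_{1}}u)$ is the one that controls $\|(P_{>1}u(T),\partial_{t}u(T))\|_{H^{s}\times H^{s-1}}^{2}$ via (\ref{Eqn:HighHsNrj}). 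The decay at scale $N_{0}$ feeds the smallness condition (\ref{Eqn:Cond11}) on the subintervals used to bound the increment of $E(I_{N_{1}}u)$, and the composition of the two exponents ($N_{1}$ a power $\propto(1-s)$ of $N_{0}$, and the final bound a power $\propto(1-s)$ of $N_{1}$) is precisely what produces $T^{\sim(1-s)^{2}+}$. Your final-paragraph "optimization of $N$ in terms of $T$" cannot recover this with one parameter.
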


\textit{Comparison with the existing results}: we shall compare the results with \cite{kenponcevega,triroygen,triroyrad}.

\begin{itemize}

\item \underline{$p=3$} \footnote{the cubic power has attracted much attention from the community: see recent work regarding
probabilistic well-posedness in \cite{burqtzvet}}: then one gets from (\ref{Eqn:MainIneqHigh}) and (\ref{Eqn:MainIneqLow})

\begin{equation}
\begin{array}{ll}
\left\| ( P_{>1} u(T),  \partial_{t} u(T) ) \right\|^{2}_{H^{s} \times H^{s-1}} & \lesssim
 T^{\frac{4(1-s)^{2}}{(4s-3)(2s-1)}+}
\end{array}
\nonumber
\end{equation}
and

\begin{equation}
\begin{array}{ll}
\left\| P_{<1} u(T) \right\|^{2}_{H^{s}} T^{-1} & \lesssim
T^{\frac{1-s } {4s-3} +}
\end{array}
\nonumber
\end{equation}

\textit{Comparison with \cite{triroygen,triroyrad}}. We recall the method used in these papers in order to estimate
$ \| P_{>1} u(T), \partial_{t} u (T)  \|^{2}_{H^{s} \times H^{s-1}} $  and $ \| P_{<1} u(T) \|^{2}_{H^{s}}$.
If $s=1 $ then it is pretty easy to estimate these norms by using the conservation of the energy (\ref{Eqn:Nrj}). If
$s < 1$ then one cannot use the energy by itself since it can be infinite. Instead one introduces the following
functional

\begin{equation}
\begin{array}{l}
E(I_{N} u(t)) := \frac{1}{2} \int_{\mathbb{R}^{3}} |\partial_{t} I_{N} u(t,x)|^{2} \, dx +
\frac{1}{2} \int_{\mathbb{R}^{3}} |\nabla I_{N} u (t,x) |^{2} \, dx +  \frac{1}{p+1} \int_{\mathbb{R}^{3}} |I_{N}u(t,x)|^{p+1} \, dx,
\end{array}
\label{Eqn:DefEIu}
\end{equation}
where the multiplier $I_{N}$ is defined by

\begin{equation}
\begin{array}{ll}
\widehat{I_{N} f}(\xi) & = m_{N}(\xi) \hat{f}(\xi),
\end{array}
\nonumber
\end{equation}
with $m_{N}(\xi) := \eta \left( \frac{\xi}{N}  \right)$, $\eta$ is a smooth, radial,
nonincreasing function in $|\xi|$ such that

\begin{equation}
\begin{array}{ll}
\eta(\xi) & :=
\left\{
\begin{array}{l}
1, \, |\xi| \leq 1 \\
\frac{1}{|\xi|^{1-s}}, \, |\xi| \geq 2,
\end{array}
\right.
\end{array}
\nonumber
\end{equation}
and $N \gg 1$ is a dyadic number playing the role of a parameter to be chosen. This is the $I$-method, designed in
\cite{almckstt} and inspired from the Fourier truncation method, designed in \cite{bourgbook}. The main interest of
introducing this multiplier $I_{N}$ is that (\ref{Eqn:DefEIu}) is finite in $H^{s} \times H^{s-1}$ for $s < 1$. Moreover
the variation of (\ref{Eqn:DefEIu}) is expected to be slow for $N \gg 1$ , since the multiplier $I_{N}$ is close to the
identity. Once we have estimated the variation of $E(I_{N} u(t))$ for a well-chosen $N \gg 1$, we can easily estimate for
data $(u_{0},u_{1}) \in H^{s} \times H^{s-1}$  the norms of the solution through the following inequalities

\begin{equation}
\begin{array}{ll}
\| ( P_{>1} u(T), \partial_{t} u(T) ) \|^{2}_{H^{s} \times H^{s-1}} & \lesssim  \sup_{t \in [0,T]} E(I_{N}u(t))
\end{array}
\label{Eqn:HighHsNrj}
\end{equation}

\begin{equation}
\begin{array}{ll}
\| P_{<1} u(T) \|^{2}_{H^{s}} & \lesssim  T^{2} \sup_{t \in [0,T]}  \| \partial_{t} I_{N} u(t) \|^{2}_{L^{2}} \\
& \lesssim  T^{2} \sup_{ t \in [0,T]} E (I_{N} u(t))
\end{array}
\label{Eqn:EstLocFreqProt1}
\end{equation}
In the case of data $(u_{0},u_{1}) \in Cl \left(  \mathcal{C}_{c}^{\infty} (B(O,R)), H^{s} \right) \times
Cl \left( \mathcal{C}_{c}^{\infty} (B(O,R)), H^{s-1} \right)$, we can upgrade (\ref{Eqn:EstLocFreqProt1}) using
the finite speed of propagation and  (\ref{Eqn:SpatSmooth3}) in the following fashion

\begin{equation}
\begin{array}{ll}
\| P_{ < 1} u(T) \|^{2}_{H^{s}} T^{-\frac{3}{2}} & \lesssim T^{-\frac{3}{2}} \| I_{N} u(T) \|^{2}_{L^{2}} \\
& \sim T^{-\frac{3}{2}} \| I_{N} u(T) \|^{2}_{L^{2} (B(0,R+1+T))} \\
& \lesssim \| I_{N} u(T) \|^{2}_{L^{4} (B(0,R+1+T))} \\
& \lesssim \sup_{t \in [0,T] } E^{\frac{1}{2}} (I_{N} u(t))
\end{array}
\label{Eqn:EstLowFreqUpg}
\end{equation}
By using this method and by an adapted linear-nonlinear decomposition, it was proved in \cite{triroygen}  that
the solution of (\ref{Eqn:NlWdat}) exists globally for $s>\frac{13}{18} $ and, moreover,

\begin{equation}
\begin{array}{ll}
\sup_{t \in [0,T]} E(I_{N}u(t)) & \lesssim  T^{\frac{8(1-s)}{18s-13}+}
\end{array}
\nonumber
\end{equation}
Hence, for data  $(u_{0},u_{1}) \in Cl \left(  \mathcal{C}_{c}^{\infty} (B(O,R)), H^{s} \right) \times
Cl \left( \mathcal{C}_{c}^{\infty} (B(O,R)), H^{s-1} \right)$, we have

\begin{equation}
\begin{array}{l}
\|  ( P_{>1} u(T), \partial_{t} u(T) ) \|^{2}_{H^{s} \times H^{s-1}} \lesssim
T^{\frac{8(1-s)}{18s-13}+}
\end{array}
\nonumber
\end{equation}
and

\begin{equation}
\begin{array}{ll}
\|  P_{ < 1} u(T) \|^{2}_{H^{s}} T^{ - \frac{3}{2}} & \lesssim  T^{\frac{4(1-s)}{18s-13}+}
\end{array}
\nonumber
\end{equation}
So our result is an improvement up to $s > \frac{51 + \sqrt{17}}{68} \approx 0.81$.

\textit{Comparison with \cite{triroyrad}}. Under the additional assumption of radial symmetry, it was proved in \cite{triroyrad} by the use of a weighted Morawetz estimate and a radial Sobolev inequality that the solution of (\ref{Eqn:NlWdat}) exists globally for $s>\frac{7}{10}$ and, moreover

\begin{equation}
\begin{array}{l}
\sup_{t \in [0,T]} E(I_{N}u(t))  \lesssim  T^{\frac{2(1-s)}{2s-1}+} , \, 1 > s \geq \frac{5}{6} \\
\sup_{t \in [0,T]} E(I_{N}u(t)) \lesssim   T^{\frac{4(1-s)}{10s -7} +}, \,  \frac{5}{6} \geq  s > \frac{7}{10}
\end{array}
\nonumber
\end{equation}
So we get in a similar fashion

\begin{equation}
\begin{array}{l}
\| ( P_{>1} u(T), \partial_{t} u(T) ) \|^{2}_{H^{s} \times H^{s-1}}  \lesssim    T^{\frac{2(1-s)}{2s-1}+} , \, s \geq \frac{5}{6} \\
\| ( P_{>1} u(T), \partial_{t} u(T) ) \|^{2}_{H^{s} \times H^{s-1}}  \lesssim  T^{\frac{4(1-s)}{10s -7} +}, \, \frac{5}{6} \geq  s > \frac{7}{10}
\end{array}
\nonumber
\end{equation}
and

\begin{equation}
\begin{array}{ll}
\| P_{< 1} u(T) \|^{2}_{H^{s}} T^{-\frac{3}{2}} \lesssim T^{\frac{1-s}{2s-1}+},  \,   1 > s \geq \frac{5}{6} \\
\| P_{<1} u(T) \|^{2}_{H^{s}} T^{-\frac{3}{2}} \lesssim  T^{\frac{2(1-s)}{10s -7} +},  \,  \frac{5}{6} \geq s > \frac{7}{10}
\end{array}
\nonumber
\end{equation}
So the improvement holds up to $ s>\frac{5}{6} $.

\textit{Comparison with \cite{kenponcevega}}. The comparison can only be partial since the authors considered data in slightly different spaces, i.e
$(u_{0} , u_{1}) \times \dot{H}^{s} \cap L^{4} \times \dot{H}^{s-1}$. It was proved in \cite{kenponcevega} that the solution exists globally for $s>\frac{3}{4}$ and, moreover,

\begin{equation}
\begin{array}{ll}
u(t) & = \cos{(tD)} u_{0} + \frac{\sin{(tD)}}{D} u_{1} + z(t)
\end{array}
\nonumber
\end{equation}
with

\begin{equation}
\begin{array}{ll}
\sup_{t \in [0,T]} \left( \| \partial_{t} z(t) \|_{L^{2}}, \| \nabla z(t) \|_{L^{2}}, \| z(t) \|^{2}_{L^{4}} \right) & \lesssim T^{\frac{1-s}{4s-3}+}
\end{array}
\label{Eqn:ResKPVzp3}
\end{equation}
Consider data $ (u_{0},u_{1}) \in Cl \left( \mathcal{C}_{c}^{\infty} (B(O,R)), H^{s}), \right)  \times
Cl \left( \mathcal{C}_{c}^{\infty} (B(O,R)), H^{s-1} \right) $, that is $(u_{0},u_{1})$ lies in the closure of smooth and
compactly support functions inside the ball $B(O,R)$ with respect to the $H^{s} \times \dot{H}^{s-1}$ topology. Notice that

\begin{equation}
\begin{array}{l}
Cl \left( \mathcal{C}_{c}^{\infty} (B(O,R)), H^{s} \right)  \times
Cl \left( \mathcal{C}_{c}^{\infty} (B(O,R)), \dot{H}^{s-1} \right) \subset  ( \dot{H}^{s} \cap L^{4} \times \dot{H}^{s-1} ) \cap ( H^{s}
\times H^{s-1} )
\end{array}
\nonumber
\end{equation}
Using (\ref{Eqn:ResKPVzp3}) and  ( \ref{Eqn:EstLowFreqUpg} ) we have

\begin{equation}
\begin{array}{ll}
\| P_{<1} u(T) \|^{2}_{H^{s}}  & \lesssim \left \| P_{<1} \cos{(TD)} u_{0} \right \|^{2}_{H^{s}} + \left \| P_{< 1} \frac{\sin{(TD)}}{D} u_{1} \right\|^{2}_{H^{s}}
+ \left\| P_{<1} z(T)  \right\|^{2}_{H^{s}} \\
& \lesssim \max{ (  T^{2},  T^{\frac{3}{2}} T^{\frac{1-s}{4s-3}+}  ) }
\end{array}
\label{Eqn:ControlKPVLowFreq}
\end{equation}
Moreover

\begin{equation}
\begin{array}{ll}
\| (P_{>1} u(T),  \partial_{t} u(T)) \|^{2}_{H^{s} \times H^{s-1}} & \lesssim  T^{\frac{2(1-s)}{4s-3}+}
\end{array}
\nonumber
\end{equation}
So the improvement holds up to $s > s_{3} := \frac{3}{4}$.
\vspace{5 mm}

\item \underline{$3<p<5$}  It was proved in \cite{kenponcevega} that the solution exists globally for
$1 > s  > s_{p} := \frac{26 p - 3 p^{2} - 39}{2(p-1)(7-p)}$ with  data  $(u_{0}, u_{1}) \in  \dot{H}^{s} \cap L^{p+1}
\times  \dot{H}^{s-1}$, and, moreover

\begin{equation}
\begin{array}{ll}
\sup_{t \in [0,T]}  \left(  \| \partial_{t} z(t) \|_{L^{2}},  \| \nabla z(t) \|_{L^{2}}, \| z(t) \|^{\frac{p+1}{2}}_{L^{p+1}} \right)
& \lesssim T^{\frac{1-s}{ 1- s - \beta }+}
\end{array}
\label{Eqn:ResKPVzp}
\end{equation}
with $\beta:= \frac{p-3}{2} + p(1-s) - 1 + \frac{2(1-s)(p-1)}{5-p}$. Consider data
$ (u_{0},u_{1}) \in Cl \left( \mathcal{C}_{c}^{\infty} (B(O,R)), H^{s}, \right)  \times
Cl \left( \mathcal{C}_{c}^{\infty} (B(O,R)), \dot{H}^{s-1} \right) $. Again, notice that

\begin{equation}
\begin{array}{l}
Cl \left( \mathcal{C}_{c}^{\infty} (B(O,R)), H^{s} \right)  \times
Cl \left( \mathcal{C}_{c}^{\infty} (B(O,R)), \dot{H}^{s-1} \right) \subset  ( \dot{H}^{s} \cap L^{p+1} \times \dot{H}^{s-1} ) \cap ( H^{s}
\times H^{s-1} )
\end{array}
\nonumber
\end{equation}
By the same token as (\ref{Eqn:ControlKPVLowFreq}) and by modifying slightly (\ref{Eqn:EstLowFreqUpg})

\begin{equation}
\begin{array}{ll}
\| P_{< 1} u(T) \|^{2}_{H^{s}} & \lesssim \max  \left(  T^{2}, T^{\frac{3(p-1)}{p+1}} T^{\frac{4(1-s)}{(p+1)(1-s-\beta)}+} \right) \cdot
\end{array}
\nonumber
\end{equation}
Moreover

\begin{equation}
\begin{array}{ll}
\left\| \left( P_{>1} u(T), \partial_{t} u(T) \right) \right\|^{2}_{H^{s} \times H^{s-1}} & \lesssim T^{\frac{2(1-s)}{1-s -\beta} +}
\end{array}
\nonumber
\end{equation}
So the improvement holds up to $s> s_{p}$.

\end{itemize}

\vspace{5 mm}

\textbf{Conclusion}. \\

\begin{itemize}

\item  $ \| ( P_{>1} u(T), \partial_{t} u(T) ) \|^{2}_{H^{s} \times H^{s-1}}$ grows more slowly  in a neighborhood of $s=1$ for data
$ (u_{0},u_{1}) \in Cl \left( \mathcal{C}_{c}^{\infty} (B(O,R)), H^{s} \right)  \times Cl \left( \mathcal{C}_{c}^{\infty} (B(O,R)), \dot{H}^{s-1} \right) $, $ Cl \left( \mathcal{C}_{c}^{\infty} (B(O,R)), H^{s} \right)  \times Cl \left( \mathcal{C}_{c}^{\infty} (B(O,R)), H^{s-1} \right)$ than
those found in \cite{triroygen,triroyrad}, \cite{kenponcevega} respectively. Indeed, it grows like $T^{ \sim (1-s)^{2} +}$ instead of
$T^{\sim (1-s)+}$ \footnote{The notation $x$ grows like $T^{y}$ in a neighborhood of $s=1$ means that there exists $\alpha(s)$
defined in a neighborhood of $s=1$ such that $x = O \left( T^{y + \alpha(s)} \right)$ and $\lim_{s \rightarrow 1^{-}} \alpha(s) =0$}.
If $5>p>3$ then the improvement holds up to $s_{p}$.

\item $ \| P_{<1} u(T) \|^{2}_{H^{s}}$ grows more slowly in a neighborhood of $s=1$ for data
$ (u_{0},u_{1}) \in Cl \left( \mathcal{C}_{c}^{\infty} (B(O,R)), H^{s} \right)  \times Cl \left( \mathcal{C}_{c}^{\infty} (B(O,R)), \dot{H}^{s-1} \right) $, \\ $ Cl \left( \mathcal{C}_{c}^{\infty} (B(O,R)), H^{s} \right)  \times Cl \left( \mathcal{C}_{c}^{\infty} (B(O,R)), H^{s-1} \right)$ than
those found in \cite{triroygen,triroyrad}, \cite{kenponcevega} respectively. Indeed it grows like $ T^{\frac{3p-5}{p+1} +}$ in a neighborhood
of $s=1$ instead of $T^{2}$. If $5>p>3$ then the improvement holds up to $s_{p}$.

\end{itemize}

\begin{rem}
\label{Rem:comp}
If $3 <p<5$ notice that, to our knowledge, the use of the Morawetz estimate and the
use of the radial Sobolev inequality have not been implemented for radial data; the adapted linear-nonlinear decomposition  has
not been implemented for general data but we expect the same phenomena to occur as $p=3$, i.e the improvement should hold for
$\tilde{s}_{p} < s < 1$ with $\tilde{s}_{p}$ a number satisfying $ 1 > \tilde{s}_{p} > s_{p}$.
\end{rem}

We set some notation that appear throughout the paper. \\
Let $W$ be the set of wave-admissible points, i.e

\begin{equation}
\begin{array}{l}
W  := \left\{ (x,y) \in \mathbb{R}^{2}, \, (x,y) \in (2, \infty] \times [2, \infty), \, \frac{1}{x} + \frac{1}{y} \leq \frac{1}{2} \right\}
\end{array}
\nonumber
\end{equation}
Let $\tilde{W}$ be the dual set of $W$, i.e

\begin{equation}
\begin{array}{l}
\tilde{W} := \left\{ (x',y') \in \mathbb{R}^{2}, \, \exists (x,y) \in W \, s.t \, \frac{1}{x} + \frac{1}{x'}=1, \,
\frac{1}{y} + \frac{1}{y'} =1 \right\}
\end{array}
\end{equation}
Given $m \in [0,1]$, we say that $(q,r)$ is $m-$ wave admissible if

\begin{itemize}
\item $(q,r) \in W$
\item $(q,r)$ satisfies $\frac{1}{q} + \frac{3}{r}= \frac{3}{2} -m$
\item  $q>2+$ if $m=1$
\end{itemize}
Given $(x_{0},R_{0}) \in \mathbb{R}^{3} \times \mathbb{R}^{+}$, let
$B(x_{0},R_{0}):= \{ x \in \mathbb{R}^{3}: |x-x_{0}| < R_{0} \}$. Let $\chi_{R_0}$ be a smooth function
supported on $B \left(0,R_0 + \frac{1}{2} \right)$ and such that  $\chi_{R_0}(x)=1$ if $|x| \leq R_0$. We say that $x \lesssim y$ if there exists
$0 < C:=C (  \| (u_{0},u_{1}) \|_{H^{s} \times H^{s-1}},R ) $  such that $x \leq C y$. We say that
$\tilde{C}$ is the constant determined by $x \lesssim y$ if $\tilde{C}$ is the smallest constant such
that $x \leq C y $ holds. More generally, given
$n \geq 1$ and $(a_{1},....,a_{n}) \in \mathbb{R}^{n}$, we
say that $x \lesssim_{a_{1},...,a_{n}} y$ if there exists \\
$C:=C(a_{1},...,a_{n},\| (u_{0},u_{1}) \|_{H^{s} \times H^{s-1}},R) >0$ such that
$x \leq C y$. We say that $x \lesssim_{\infty-} y^{\infty -}$ if for every $q \geq 1$,
there exists $C:=C(q,\| (u_{0},u_{1}) \|_{H^{s} \times H^{s-1}},R) >0$ such that $x \leq C y^{q}$. We say that $x \ll y$ if there exists
$0 < c:=c (\| (u_{0},u_{1}) \|_{H^{s} \times H^{s-1}},R ) \ll 1$ such that $x \leq c y$. In a similar fashion we extend this definition to
$x \ll_{a_{1},...,a_{n}} y$. \\
Some estimates that we establish throughout the paper require a Paley-Littlewood decomposition. We set it up now.
Let $\phi(\xi)$ be a smooth, real, radial, nonincreasing function that is equal to one $1$ on $B(O,1)$ and that is supported
on $B(O,2)$. Let $\psi$ denote the function $\psi(\xi):= \phi(\xi) - \phi(2 \xi)$.  Let $\tilde{\psi}$ denote the
function $\tilde{\psi}(\xi):= \phi (\frac{\xi}{8}) - \phi (8 \xi)$. If $(M,M_{1},M_{2}) \in (2^{\mathbb{N}^{*}})^{3}$
are three dyadic numbers such that $M_{2} \geq M_{1}$ then

\begin{equation}
\begin{array}{ll}
\widehat{P_{\leq M} f}(\xi) &  := \phi \left( \frac{\xi}{M} \right) \hat{f}(\xi) \\
\widehat{P_{M}f}(\xi) & := \psi \left( \frac{\xi}{M} \right) \hat{f}(\xi)  \\
\widehat{P_{\ll M} f} (\xi) & := \widehat{P_{ \leq  \frac{M}{128}} f} (\xi) \\
\widehat{P_{\gtrsim M} f} (\xi) & := \hat{f}(\xi) - \widehat{P_{\ll M} f}(\xi) \\
\tilde{\phi}(\xi) & := \phi \left( \frac{\xi}{4} \right) - \phi(4\xi) \\
\widehat{\tilde{P}_{M} f} (\xi) & = \tilde{\psi} \left( \frac{\xi}{M} \right) \widehat{f}(\xi) \\
P_{M_{1} \leq \cdot \leq M_{2}} f  & := P_{ \geq M_{2}} f - P_{< M_{1}} f \\
\end{array}
\nonumber
\end{equation}
Notice that $f=P_{\ll M} f + P_{\gtrsim M} f$ and that $\tilde{P}_{M} P_{M} = P_{M}$.
Let

\begin{equation}
\begin{array}{ll}
K_{R_{0}}(J) & := \left\{ (t,x): t \in J, \, t > |x| - R_{0}  \right\} \\
\partial K_{R_{0}}(J) & := \left\{ (t,x): t \in J, \, t=|x| - R_{0}  \right\} \\
K_{R_{0}}^{c}(J) & := \left\{ (t,x): t \in J, \, t < |x| - R_{0}   \right\}
\end{array}
\nonumber
\end{equation}
and

\begin{equation}
\begin{array}{l}
E(I_{N}u(t))  :=  \frac{1}{2} \int_{\mathbb{R}^{3}} |\partial_{t} I_{N} u (t,x)|^{2} \, dx +
\frac{1}{2} \int_{\mathbb{R}^{3}} |\nabla I_{N} u(t,x)|^{2} \, dx + \frac{1}{p+1} \int_{\mathbb{R}^{3}} |I_{N} u(t,x)|^{p+1} \, dx \\
E_{R_{0},sec}(I_{N} u(t))  :=
\begin{array}{l}
\frac{1}{2} \int_{|x| \leq t + R_{0} } |\partial_{t} I_{N} u(t,x)|^{2} \, dx +
\frac{1}{2} \int_{|x| \leq t + R_{0}} |\nabla I_{N} u(t,x)|^{2} \, dx \\
 + \frac{1}{p+1} \int_{|x| \leq t + R_{0}} |I_{N} u(t,x)|^{p+1} \\
\end{array} \\
E_{R_{0},ext}(I_{N} u(t))  :=
\begin{array}{l}
\frac{1}{2} \int_{|x| > t + R_{0}} |\partial_{t} I_{N} u(t,x)|^{2} \, dx +
\frac{1}{2} \int_{|x| > t + R_{0}} |\nabla I_{N} u (t,x)|^{2} \, dx \\
 + \frac{1}{p+1} \int_{|x| > t + R_{0}} |I_{N} u(t,x)|^{p+1} \, dx
\end{array}  \\
Flux(I_{N} u, \partial K_{R_{0}}([a,b]))  := \frac{1}{ \sqrt{2}} \int_{ \partial K_{R_{0}}([a,b])}
 \frac{1}{2} \left| \frac{ \nabla I_{N} u \cdot x}{|x|} + \partial_{t} I_{N} u  \right|^{2} + \frac{| I_{N} u|^{p+1}}{p+1} \, d \sigma
\end{array}
\nonumber
\end{equation}
Given $J$ and interval and $f$ a differentiable in time and smooth function let

\begin{equation}
\begin{array}{ll}
Z_{m,s} (J,f) & := \sup_{(q,r)-m- wave \, admissible} \| \partial_{t} D^{-m} I_{N} f \|_{L_{t}^{q} L_{x}^{r}(J)} + \| D^{1-m} I_{N} f \|_{L_{t}^{q} L_{x}^{r}(J)}
\end{array}
\nonumber
\end{equation}
and

\begin{equation}
\begin{array}{ll}
Z(J,f) & := \sup_{m \in [0,1]} Z_{m,s}(J,f)
\end{array}
\nonumber
\end{equation}
If we work with the same parameter $N$, then we forget it in all the expressions where it appears in order to simplify the notation
and we write $I$ for $I_{N}$.\\
Let $R':=R+1$. Let $s_{c} := \frac{3}{2} - \frac{2}{p-1} $. \\ \\

With this notation in mind, we now recall two propositions. The Strichartz estimates
can be stated as follows:

\begin{prop}{(Strichartz estimates) (See \cite{linsog}.)}
Assume that $u$ satisfies the following wave equation on $\mathbb{R}^{3}$

\begin{equation}
\left\{
\begin{array}{ll}
\partial_{tt} u - \triangle u & = G \\
u(0,x) & := u_{0}(x) \\
\partial_{t} u(0,x) & := u_{1}(x)
\end{array}
\right.
\nonumber
\end{equation}
Let $T  \geq 0$. Then, if $m \in [0,1]$

\begin{equation}
\begin{array}{l}
\| u \|_{L_{t}^{q} L_{x}^{r} (J)} + \| \partial_{t} D^{-1} u \|_{L_{t}^{q} L_{x}^{r} (J)}
+ \| u \|_{L_{t}^{\infty} \dot{H}^{m} (J)} + \| \partial_{t} u \|_{ L_{t}^{\infty} \dot{H}^{m} (J)} \\
\lesssim \| u_{0} \|_{\dot{H}^{m}} + \| u_{1} \|_{\dot{H}^{m-1}} + \| F \|_{L_{t}^{\tilde{q}} L_{x}^{\tilde{r}}(J)}
\end{array}
\nonumber
\end{equation}
under the following assumptions:

\begin{itemize}

\item $(q,r)$ is $m$-wave admissible
\item $(\tilde{q}, \tilde{r})$ satisfies the following conditions:

\begin{enumerate}
\item $(\tilde{q},\tilde{r}) \in \tilde{W}$
\item $\frac{1}{\tilde{q}} + \frac{3}{\tilde{r}} - 2 = \frac{1}{q} + \frac{3}{r} $
\end{enumerate}

\end{itemize}
\end{prop}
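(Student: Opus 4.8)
The plan is to split the Duhamel representation \eqref{Eqn:StrongSol} into its homogeneous and inhomogeneous parts and to reduce everything to a dispersive decay estimate for the free half-wave propagators $e^{\pm itD}$. Writing $\cos(tD)=\frac{1}{2}\left(e^{itD}+e^{-itD}\right)$ and $\frac{\sin(tD)}{D}=\frac{1}{2iD}\left(e^{itD}-e^{-itD}\right)$, it suffices to bound $e^{\pm itD}$ applied to the data and the retarded time integral of the forcing term $F$. Since the Fourier multipliers $D^{m}$ commute with $e^{\pm itD}$, conjugating by $D^{m}$ reduces the general $m$-case to a single fixed-regularity statement; this is exactly why the scaling identity $\frac{1}{q}+\frac{3}{r}=\frac{3}{2}-m$ appears in the definition of $m$-wave admissibility.

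For the homogeneous estimate I would first record the two elementary bounds: the $L^{2}$ conservation $\|e^{\pm itD}f\|_{L^{2}}=\|f\|_{L^{2}}$ from Plancherel, and the frequency-localized dispersive decay in $\mathbb{R}^{3}$, namely $\|e^{\pm itD}P_{M}f\|_{L^{\infty}}\lesssim M^{2}|t|^{-1}\|P_{M}f\|_{L^{1}}$, obtained by stationary phase applied to the oscillatory wave kernel on each dyadic frequency block. Interpolating these endpoints yields an $L^{1}\to L^{\infty}$ decay at scale $M$, and a $TT^{*}$ argument together with the Hardy--Littlewood--Sobolev inequality in time upgrades this to $\|e^{\pm itD}f\|_{L^{q}_{t}L^{r}_{x}}\lesssim\|f\|_{\dot H^{m}}$ for every $m$-wave admissible $(q,r)$. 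Crucially, the definition of $W$ forces $q>2$, so the delicate Klainerman--Machedon endpoint $(q,r)=(2,\infty)$ in three dimensions is excluded and no Keel--Tao endpoint machinery is needed. The $L^{\infty}_{t}\dot H^{m}$ terms require no dispersion at all: they follow from the $L^{2}$ conservation above (after conjugating by $D^{m}$) together with the energy identity for the forced equation.

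For the inhomogeneous term I would dualize. The dual of the homogeneous estimate at regularity level $1-m$ gives $\|\int_{J}e^{-it'D}F(t')\,dt'\|_{\dot H^{m-1}}\lesssim\|F\|_{L^{\tilde q}_{t}L^{\tilde r}_{x}}$ precisely when the pre-dual pair of $(\tilde q,\tilde r)$ lies in $W$, i.e. when $(\tilde q,\tilde r)\in\tilde W$; the bookkeeping $\frac{1}{\tilde q}+\frac{3}{\tilde r}=4-\left(\frac{1}{2}+m\right)=\frac{7}{2}-m$ is exactly the gap condition $\frac{1}{\tilde q}+\frac{3}{\tilde r}-2=\frac{1}{q}+\frac{3}{r}$ in the statement. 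Composing with the factor $D^{-1}$, which raises regularity from $\dot H^{m-1}$ to $\dot H^{m}$, and then applying the homogeneous estimate to the resulting $\dot H^{m}$ datum controls the full-line operator $\int_{J}e^{i(t-t')D}D^{-1}F\,dt'$ in $L^{q}_{t}L^{r}_{x}$. It then remains to replace the full-line integral by the retarded one $\int_{0}^{t}$, which is the Christ--Kiselev lemma; this applies because the time exponents satisfy $\tilde q<2<q$ strictly, again thanks to the exclusion of the endpoint.

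The main obstacle is the frequency-localized dispersive decay together with its summation over dyadic blocks: the stationary-phase analysis of the wave kernel in three dimensions must be carried out with constants uniform in the scale $M$, so that after the $TT^{*}$ and Hardy--Littlewood--Sobolev steps the dyadic pieces reassemble into $\|f\|_{\dot H^{m}}$ without loss in the Littlewood--Paley sum. Everything else — the reduction via $D^{m}$, the duality computation that fixes $(\tilde q,\tilde r)$, and the Christ--Kiselev truncation — is routine once this decay estimate and its summation are in hand.
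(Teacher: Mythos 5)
The paper gives no proof of this proposition: it is quoted as a known black-box result from Lindblad--Sogge \cite{linsog}, so there is no internal argument to compare yours against. Your outline --- frequency-localized dispersive decay $\|e^{\pm itD}P_{M}f\|_{L^{\infty}}\lesssim M^{2}|t|^{-1}\|P_{M}f\|_{L^{1}}$, $TT^{*}$ with Hardy--Littlewood--Sobolev, Littlewood--Paley reassembly, duality for the inhomogeneous term with the stated gap condition, and Christ--Kiselev to pass to the retarded integral (legitimate because $W$ forces $q>2>\tilde q$ and excludes the $(2,\infty)$ endpoint) --- is the standard and correct route to this non-endpoint estimate and is consistent with the cited source, the only sketch-level omission being that non-sharp admissible pairs with $\frac{1}{q}+\frac{1}{r}<\frac{1}{2}$ are obtained from sharp ones by Sobolev embedding in $x$ rather than directly from $TT^{*}$.
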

The second proposition shows that the initial mollified energy at time $0$ is finite in $H^{s} \times H^{s-1}$
and in fact bounded by $N^{2(1-s)}$:

\begin{prop}{(Initial mollified energy at time $0$ is bounded by $N^{2(1-s)}$). (See \cite{triroyrad}.) }
There exists $C_{E}:=C_{E}(\|(u_{0},u_{1})\|_{H^{s} \times H^{s-1}}) >0$ such that

\begin{equation}
\begin{array}{ll}
E(Iu_{0}) & \leq C_{E} N^{2(1-s)} \cdot
\end{array}
\label{Eqn:BoundInitNrj}
\end{equation}
\label{prop:BoundInitNrj}
\end{prop}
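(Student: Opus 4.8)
The plan is to estimate the three constituents of $E(Iu_{0})$ separately: the velocity term $\frac{1}{2}\|Iu_{1}\|_{L^{2}}^{2}$ (using $\partial_{t}u(0)=u_{1}$), the gradient term $\frac{1}{2}\|\nabla Iu_{0}\|_{L^{2}}^{2}$, and the potential term $\frac{1}{p+1}\|Iu_{0}\|_{L^{p+1}}^{p+1}$, showing that each is $\lesssim N^{2(1-s)}$. The first two are purely Fourier-analytic: they reduce to pointwise bounds on the symbol $m_{N}(\xi)=\eta(\xi/N)$ of $I=I_{N}$, after which Plancherel closes the estimate against $\|u_{1}\|_{H^{s-1}}^{2}$ and $\|u_{0}\|_{H^{s}}^{2}$ respectively. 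The third requires a Sobolev embedding and is where the restriction on $s$ and $p$ enters.

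First I would record two pointwise symbol inequalities, valid for all $\xi$ and all dyadic $N\ge 1$:
$$ m_{N}(\xi)\lesssim N^{1-s}\langle\xi\rangle^{s-1}, \qquad |\xi|\,m_{N}(\xi)\lesssim N^{1-s}\langle\xi\rangle^{s}. $$
Both follow from a three-region analysis. On $|\xi|\le N$ one has $m_{N}(\xi)=1$, so the bounds reduce to $\langle\xi\rangle^{1-s}\lesssim N^{1-s}$ and $|\xi|=|\xi|^{1-s}|\xi|^{s}\le N^{1-s}\langle\xi\rangle^{s}$, both of which use $|\xi|\lesssim N$. On $|\xi|\ge 2N$ one has $m_{N}(\xi)=N^{1-s}|\xi|^{s-1}$ and $\langle\xi\rangle\sim|\xi|$, so both inequalities hold with comparable constants. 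On the transition region $N<|\xi|<2N$ one uses $\langle\xi\rangle\sim|\xi|\sim N$ together with $m_{N}\le 1$. Multiplying $\hat{u}_{1}$, resp.\ $\hat{u}_{0}$, by these symbols and applying Plancherel gives $\|Iu_{1}\|_{L^{2}}^{2}\lesssim N^{2(1-s)}\|u_{1}\|_{H^{s-1}}^{2}$ and $\|\nabla Iu_{0}\|_{L^{2}}^{2}\lesssim N^{2(1-s)}\|u_{0}\|_{H^{s}}^{2}$; since $\lesssim$ absorbs constants depending on $\|(u_{0},u_{1})\|_{H^{s}\times H^{s-1}}$, both terms are $\lesssim N^{2(1-s)}$.

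For the potential term I would \emph{not} pass through $\dot{H}^{1}$ (which would cost an unfavorable power $N^{\frac{3(p-1)}{2}(1-s)}$), but instead use the sharp Sobolev embedding $\dot{H}^{\sigma}(\mathbb{R}^{3})\hookrightarrow L^{p+1}(\mathbb{R}^{3})$ with $\sigma:=\frac{3}{2}-\frac{3}{p+1}=\frac{3(p-1)}{2(p+1)}$. Since $3\le p<5$ we have $\sigma<1$, and since $m_{N}\le 1$ pointwise, $\|Iu_{0}\|_{\dot{H}^{\sigma}}\le\|u_{0}\|_{\dot{H}^{\sigma}}$. Provided $\sigma\le s$ — which holds throughout the admissible range, since $s>s_{p}\ge\sigma$ (with equality at $p=3$ and as $p\to 5$) — one has $\|u_{0}\|_{\dot{H}^{\sigma}}\lesssim\|u_{0}\|_{H^{s}}$ because $|\xi|^{2\sigma}\lesssim\langle\xi\rangle^{2s}$. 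Hence $\|Iu_{0}\|_{L^{p+1}}^{p+1}\lesssim\|u_{0}\|_{H^{s}}^{p+1}\lesssim 1$, and as $N\ge 1$ and $1-s>0$ give $1\le N^{2(1-s)}$, this term too is $\lesssim N^{2(1-s)}$. Summing the three contributions yields the claim.

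The main obstacle is the potential term: the naive route through $\|\nabla Iu_{0}\|_{L^{2}}$ wastes regularity and produces a power of $N$ that grows with $p$, so the estimate hinges on using exactly the critical Sobolev exponent $\sigma$ for $L^{p+1}$ and on the compatibility $\sigma\le s$ with the range $s>s_{p}$. The symbol estimates for the velocity and gradient parts are routine once the transition region $N<|\xi|<2N$ is handled, and the use of $m_{N}\le 1$ throughout keeps every constant harmless.
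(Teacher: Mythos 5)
Your proof is correct and is essentially the standard argument: the paper does not reprove this proposition (it cites \cite{triroyrad}), and the proof there proceeds exactly as you do --- pointwise bounds on the symbol $m_{N}$ plus Plancherel for the velocity and gradient terms, and the Sobolev embedding $\dot{H}^{\sigma}(\mathbb{R}^{3}) \hookrightarrow L^{p+1}(\mathbb{R}^{3})$ with $\sigma = \frac{3(p-1)}{2(p+1)} \leq s$ for the potential term. Your observation that the naive interpolation route through $\|\nabla I u_{0}\|_{L^{2}}$ would produce $N^{\frac{3(p-1)}{2}(1-s)}$ and overshoot $N^{2(1-s)}$ is a correct and relevant sanity check.
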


Now we explain the main ideas of this paper. It is well-known that the long-time behavior of solutions of semilinear wave equations with a defocusing
nonlinearity is closely related to the Morawetz-type decay estimates. In \cite{triroyrad}, a mollified variant of the Morawetz-Strauss estimate
\cite{morstr} was used, namely

\begin{equation}
\begin{array}{ll}
\int_{0}^{T} \int_{\mathbb{R}^{3}} \frac{|u|^{4}}{|x|} \, dx dt & \lesssim E,
\end{array}
\label{Eqn:WeightMor}
\end{equation}
in the study of the long-time behavior of solutions of (\ref{Eqn:NlWdat}) with $p=3$ and data in $H^{s} \times H^{s-1}$, $s<1$. Under the assumption of the radial symmetry, one was able to control a mollified variant of

\begin{equation}
\begin{array}{ll}
\int_{0}^{T} \int_{\mathbb{R}^{3}} |u|^{6}(t,x) \, dx \, dt & \lesssim E^{2},
\end{array}
\label{Eqn:Lt6Lx6}
\end{equation}
by combining (\ref{Eqn:WeightMor}) with a radial Sobolev estimate, namely

\begin{equation}
\begin{array}{ll}
|u(t,x)| & \lesssim \frac{\| u(t) \|_{\dot{H}^{1}}}{|x|^{\frac{1}{2}}} \cdot
\end{array}
\nonumber
\end{equation}
Notice that a mollified variant of (\ref{Eqn:Lt6Lx6}) is pretty useful for regularity purposes since it is a decay estimate with respect to a mollified variant
of the energy that measures the smoothness of the solution. Indeed, if we ignore the integration with respect to time in (\ref{Eqn:Lt6Lx6}), we have a decay of the form $\| u(t) \|^{6}_{L^{6}}
\lesssim E^{2}$, which is better than the Sobolev embedding $\| u(t) \|^{6}_{L^{6}} \lesssim \| \nabla u(t) \|^{6}_{L^{2}} \lesssim E^{3}$.
Therefore it is useful to estimate the $H^{s}$ norms of the solution ``far'' from $s=1$ \footnote{More precisely, the computations show
that it is useful close to $s=\frac{7}{10}$: see \cite{triroyrad} for more information}. But notice that (\ref{Eqn:WeightMor}) is a
weighted estimate and the decay is slow with respect to time, and even after combining this inequality with the radial Sobolev inequality, one loses
integrability in time. In \cite{triroygen}, an adapted linear-nonlinear decomposition based upon the fact that the nonlinear part of the solution is smoother than $H^{s}$ was performed but this tool can only be used for regularity purposes and it does not yield
information regarding the asymptotic behavior of the solution, in particular close to $s=1$. In the study of the energy-critical wave equation
(i.e $p=5$), a Morawetz-type estimate using the scaling multiplier inside the cone was used, namely \footnote{see for example \cite{bahger}: here we assume
that we work with large energy, i.e $E \gtrsim 1$. }

\begin{equation}
\begin{array}{ll}
\int_{|x| \leq T + R} |u|^{6} (T,x) \, dx & \lesssim \frac{R}{T+R} E + E_{R,sec}(u(T)) - E_{R,sec}(u(0)) \\
& + ( E_{R,sec} (u(T)) - E_{R,sec}(u(0)) )^{\frac{1}{3}}
\end{array}
\nonumber
\end{equation}
This estimate with general data is a weak decay since it is only valid inside the cone and depends on the flux

\begin{equation}
\begin{array}{ll}
Flux( u, \partial K_{R} ([0,T]) )  & := \frac{1}{\sqrt{2}} \int_{\partial K_{R} ([0,T])} \frac{1}{2}
\left| \frac{\nabla u \cdot x}{|x|} + \partial_{t} u   \right|^{2}  + \frac{|u|^{6}}{6} \, d \sigma \\
& = E_{R,sec}(u(T)) - E_{R,sec} (u(0))
\end{array}
\nonumber
\end{equation}
But, if we work with compactly supported data inside the ball $B(O,R)$, then it is much stronger since the flux on the boundary vanishes and, by finite speed of propagation, the solution
is localized inside a cone. Getting back to (\ref{Eqn:NlWdat}), it is worth trying to establish a decay estimate by using the same multiplier for these data. One finds that, for $ 3 \leq p \leq 5$,

\begin{equation}
\begin{array}{ll}
\int_{|x| \leq T+R} |u|^{p+1} (T,x) \, dx & \lesssim  \frac{R}{T+R} E \cdot
\end{array}
\label{Eqn:SubcritMor}
\end{equation}
Notice that this estimate, unlike (\ref{Eqn:WeightMor}), is ``unweighted'' and pointwise in time. Therefore, it can
be used everywhere. The next step is to find the right framework in which we can use this estimate in rougher spaces, i.e
$H^{s} \times H^{s-1}$, $s<1$. It seems natural to choose data
$(u_{0},u_{1}) \in Cl \left(  \mathcal{C}_{c}^{\infty} (B(O,R)), H^{s} \right) \times
Cl \left( \mathcal{C}_{c}^{\infty} (B(O,R)), H^{s-1} \right)$. Then we would like to use a low-high frequency decomposition \cite{bourgbook,almckstt}
in order to estimate the $H^{s}$ norms of the solution. By introducing the multiplier $I$, one would like to

\begin{enumerate}

\item compare the $H^{s}$ norms of the solution with the mollified energy $E(Iu(T))$

\item estimate the slow variation of $E(Iu(T))$ by using Strichartz estimates and a decay looking like

\begin{equation}
\begin{array}{ll}
\int_{|x| \leq R +T} |Iu(T,x)|^{p+1} \, dx & \lesssim  \frac{R}{T+R} E(Iu(0)) + Error \, Terms,
\end{array}
\label{Eqn:ProtMorIu}
\end{equation}
the error terms coming from the fact that the multiplier $I$ does not commute with the nonlinearity.
\end{enumerate}
But before starting the procedure, one must be careful. Indeed, recall that the decay estimate (\ref{Eqn:SubcritMor})
is useful if we work with data compactly supported inside $B(0,R)$. The introduction of the
multiplier $I$ in the Fourier domain kills the localization of the data and consequently, the localization of
the solution inside the cone. But although we cannot perform an analysis inside the cone, we manage to perform an analysis
in a neighborhood of it and outside it in order to control all the error terms: see
Proposition \ref{Prop:SpatSmooth}, Proposition \ref{Prop:DecayPot2} and their proofs. (\ref{Eqn:ProtMorIu}) is established
in Proposition \ref{Prop:DecayPot2}. This is enough to establish (\ref{Eqn:MainIneqLow}): see the beginning of the proof of
Theorem \ref{Thm:Main}. The proof of (\ref{Eqn:ProtMorIu}) relies upon the variation of a mollified energy $E(I_{N_{0}}u)$. One
cannot use this mollified energy in order to find an upper bound of $ \| (P_{>1} u(T), \partial_{t} u(T) \|^{2}_{H^{s} \times H^{s-1}} $
of the solution since

\begin{itemize}

\item
the error appearing in the process of proving (\ref{Eqn:ProtMorIu}) is more difficult to control than that appearing in the process
of estimating the variation of $E(I_{N_{0}} u)$.
\item both errors involve the same parameter $N_{0}$

\end{itemize}
The idea is to introduce a new parameter $N_{1}$, a new mollified energy $E(I_{N_{1}}u)$ and to use
this decay estimate in order to control the variation of $E(I_{N_{1}} u)$: see Section \ref{sec:ProofThm}.

\section{Proof of Theorem \ref{Thm:Main}}
\label{sec:ProofThm}

In this section we prove Theorem \ref{Thm:Main}, assuming that some propositions
are true. We now state these propositions.

This first proposition, proved in Section \ref{Sec:LocalBd}, shows that if we have an a priori bound
of the mollified energy on an interval $J$, then we can control $Z(J,u)$ assuming that $J$ is small
in some sense (see (\ref{Eqn:Cond11}) and (\ref{Eqn:Cond12})):

\begin{prop}{(Local boundedness). }
Let $u$ be a solution of (\ref{Eqn:NlWdat}) on $[0,T]$. Let $J :=[a,b] \subset[0,T]$.
Assume that

\begin{equation}
\begin{array}{ll}
\sup_{t \in J} E(Iu(t)) & \lesssim N^{2{(1-s)}} \cdot
\end{array}
\label{Eqn:InducNrj}
\end{equation}
There exists $\epsilon>0$ small enough such that if

\begin{equation}
\begin{array}{ll}
\| I u \|^{(1-\theta) (p-1)}_{L_{t}^{\infty} L_{x}^{p+1} ( K_{R'} (J) )} |J|^{\frac{(1-\theta)(p-1) +}{p+1}}   N^{(1-s) \theta (p-1)+}
& \leq \epsilon
\end{array}
\label{Eqn:Cond11}
\end{equation}
and

\begin{equation}
\begin{array}{ll}
|J|^{+} \leq \epsilon N^{ \left( (p-1) s - \left(  p-1 - \frac{5-p}{2} \right) \right)+},
\end{array}
\label{Eqn:Cond12}
\end{equation}
then

\begin{equation}
\begin{array}{ll}
Z(J,u) & \lesssim N^{1-s}.
\end{array}
\nonumber
\end{equation}
\label{Prop:Boundedness}
\end{prop}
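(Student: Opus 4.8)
The plan is to run a bootstrap/continuity argument on the quantity $Z(J,u)$ using the Strichartz estimates applied to the mollified solution $Iu$. Since $I$ commutes with the linear wave operator, $Iu$ satisfies the same wave equation but with forcing term $I(|u|^{p-1}u)$. The strategy is: estimate $Z(J,u)$ by the Strichartz bound (Proposition with $\tilde{q},\tilde{r}$), split the data contribution and the nonlinear (Duhamel) contribution, control the data term by $N^{1-s}$ using the hypothesis $\sup_{t\in J}E(Iu(t))\lesssim N^{2(1-s)}$ together with Proposition \ref{prop:BoundInitNrj}, and absorb the nonlinear term back into $Z(J,u)$ via the smallness conditions \eqref{Eqn:Cond11} and \eqref{Eqn:Cond12}.

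First I would apply the Strichartz estimate to $Iu$ on $J$ at each regularity $m\in[0,1]$. The homogeneous part $\|D^{1-m}Iu_0\|+\|\partial_t D^{-m}Iu_1\|$-type terms are controlled by $E^{1/2}(Iu(a))\lesssim N^{1-s}$ after using that the energy at the left endpoint is comparable to the energy bound in \eqref{Eqn:InducNrj}. The heart of the matter is the inhomogeneous term $\|I(|u|^{p-1}u)\|_{L_t^{\tilde q}L_x^{\tilde r}(J)}$. The standard move is to write $I(|u|^{p-1}u)\approx |Iu|^{p-1}Iu$ up to commutator errors (which the $I$-method handles and which are lower order at these regularities), and then estimate $\||Iu|^{p-1}Iu\|_{L_t^{\tilde q}L_x^{\tilde r}}\lesssim \|Iu\|^{p-1}_{(\cdots)}\,\|Iu\|_{(\cdots)}$ by Hölder. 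The exponent $\theta=\frac{4p-12}{(p-1)(7-p)}$ should be exactly the interpolation parameter that splits the $p-1$ copies of $Iu$ between an $L_t^\infty L_x^{p+1}$ factor (whose norm on $K_{R'}(J)$ is controlled by the decay/flux analysis and appears raised to the power $(1-\theta)(p-1)$ in \eqref{Eqn:Cond11}) and a Strichartz-admissible factor that is absorbed into $Z(J,u)$, while the final copy of $Iu$ carries the $D^{1-m}$ derivative and is also bounded by $Z(J,u)$.

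Assembling these, one gets a self-improving inequality of the schematic form
\begin{equation}
Z(J,u) \lesssim N^{1-s} + \|Iu\|^{(1-\theta)(p-1)}_{L_t^\infty L_x^{p+1}(K_{R'}(J))}\,|J|^{\frac{(1-\theta)(p-1)+}{p+1}}\,N^{(1-s)\theta(p-1)}\,Z(J,u),
\nonumber
\end{equation}
where condition \eqref{Eqn:Cond11} forces the prefactor of $Z(J,u)$ on the right to be $\le\epsilon$; condition \eqref{Eqn:Cond12} is what converts the $N$-dependence coming from the $\theta(p-1)$ powers and the derivative-loss in the commutator/Hölder step into the stated power of $|J|$, ensuring the time factor can be made small. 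A standard continuity (bootstrap) argument then yields $Z(J,u)\lesssim N^{1-s}$, provided $\epsilon$ is chosen small enough to absorb the second term.

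The main obstacle I anticipate is the bookkeeping of the exponents: verifying that the Hölder split of $|Iu|^{p-1}Iu$ among $L_t^\infty L_x^{p+1}$ and the wave-admissible Strichartz pairs is consistent (i.e. the three scaling identities $\frac1{\tilde q}+\frac3{\tilde r}-2=\frac1q+\frac3r$, the admissibility of each pair, and the derivative count) and that these produce \emph{precisely} the powers $(1-\theta)(p-1)$ on the potential factor, $\theta(p-1)$ on $N^{1-s}$, and the $|J|$-exponent appearing in \eqref{Eqn:Cond11}–\eqref{Eqn:Cond12}. A secondary difficulty is controlling the commutator error $I(|u|^{p-1}u)-|Iu|^{p-1}Iu$: one must show, via the smoothness of the multiplier $m_N$ and the energy bound \eqref{Eqn:InducNrj}, that it is genuinely of lower order and does not disturb the $N^{1-s}$ bound — this is where the restriction to $s>s_p$ and the superconformal range $3\le p<5$ will be used, so that the derivative gained by $I$ dominates the derivative lost in the Hölder/interpolation step.
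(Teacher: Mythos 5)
Your overall architecture --- Strichartz for $Iu$, control of the data term by the energy hypothesis (\ref{Eqn:InducNrj}), a H\"older/interpolation split of the nonlinearity with parameter $\theta$ producing exactly the factor appearing in (\ref{Eqn:Cond11}), and a continuity argument to absorb the remainder --- is the same as the paper's. The one genuine structural difference is how you treat the nonlinear term: you route through the commutator $I(|u|^{p-1}u)-|Iu|^{p-1}Iu$ (i.e.\ through Proposition \ref{Prop:EstIw3}) and then estimate the main term $|Iu|^{p-1}Iu$, whereas the paper never introduces the commutator in this proof. Instead it decomposes $u=P_{<<N}u+P_{\gtrsim N}u$ inside $\| |u|^{p-1}u\|_{L_t^{1}L_x^{2}(J)}$ and estimates the four resulting terms $A_{1},\dots,A_{4}$: the all-low-frequency term $A_{1}$ is the one interpolated (with exponent $\theta$) between an $L_t^{8}L_x^{8}$ Strichartz norm and the $L_t^{\infty}L_x^{p+1}(K_{R'}(J))$ norm, and is controlled by (\ref{Eqn:Cond11}); every term containing a factor $P_{\gtrsim N}u$ gains $N^{-\frac{5-p}{2}+}$ and is controlled by (\ref{Eqn:Cond12}) --- precisely the role you assign to the commutator. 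The two decompositions are essentially interchangeable (your commutator is bounded by the same $|J|^{+}Z^{p}(J,u)/N^{\frac{5-p}{2}-}$, so (\ref{Eqn:Cond12}) closes it under the bootstrap hypothesis $Z\lesssim N^{1-s}$, with no circularity), and your route has the small advantage of reusing Proposition \ref{Prop:EstIw3}. What your sketch leaves undone, and what constitutes most of the actual work in the paper, is the exponent bookkeeping you yourself flag as the main obstacle: the paper does not bootstrap on the full $Z(J,u)=\sup_{m}Z_{m,s}(J,u)$ but on $\bar{Z}(J,u):=\sup\{Z_{1,s},Z_{s_{c},s},Z_{\frac{3}{2}-\frac{5}{2p},s}\}$, chosen so that the specific H\"older/Bernstein pairs (e.g.\ $L_t^{p}L_x^{2p}$ for the all-high-frequency term, $L_t^{2(p-1)}L_x^{2(p-1)}$ at regularity $s_{c}$) are admissible and the $N^{-\frac{5-p}{2}}$ gains actually materialize; $Z_{0,s}$ is then obtained from the energy bound (\ref{Eqn:InducNrj}) rather than from Strichartz, and the remaining $m\in(0,1)$ by interpolation. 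One minor correction: the data term at time $a$ is controlled by (\ref{Eqn:InducNrj}) alone; Proposition \ref{prop:BoundInitNrj} concerns time $0$ and is not needed here.
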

The second proposition, proved in Section \ref{Sec:PartialDecay} shows that we have a partial decay estimate of the potential term of the
mollified energy. The decrease is partial since only the first term of the right-hand side of
(\ref{Eqn:DecayPot}) shows that there is decay:

\begin{prop}{(Partial decay estimate of potential term of mollified energy).}
Let $(a,b,R') \in \mathbb{R}^{+} \times \mathbb{R}^{+}$. Let $u$ be a solution of (\ref{Eqn:NlWdat}) on
$[a,b]$. Then

\begin{equation}
\begin{array}{ll}
\int_{|x| \leq b + R' } |Iu(b,x)|^{p+1} \, dx  &  \lesssim
\frac{a+R'}{b+R'} E(Iu(a))
+ \frac{1}{\sqrt{2} (b+R')} \int_{ \partial K_{R'}([a,b])}
 \frac{ | \nabla I u \cdot x + (t+R') \partial_{t} I u + I u |^{2} }{t+R'} \, d \sigma \\
& +  \frac{1}{b+R'} \int_{K_{R'}([a,b])}
\Re \left(
\begin{array}{l}
\overline{ (t+R') \partial_{t} Iu  + x \cdot \nabla Iu  + Iu } \\
\left( |Iu|^{p-1} Iu - I(|u|^{p-1} u)  \right)
\end{array}
\right)
\, dz.
\end{array}
\label{Eqn:DecayPot}
\end{equation}
\label{Prop:DecayPot}
\end{prop}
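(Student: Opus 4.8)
Write $v := Iu$. Applying the multiplier $I = I_N$ to (\ref{Eqn:NlWdat}) gives $\partial_{tt}v - \Delta v = -I(|u|^{p-1}u)$, which I rewrite as $\partial_{tt}v - \Delta v + |v|^{p-1}v = \mathcal{E}$, where $\mathcal{E} := |v|^{p-1}v - I(|u|^{p-1}u)$ is the commutator by which $I$ fails to commute with the nonlinearity. The plan is to run the scaling (dilation) Morawetz identity adapted to the translated light cone with vertex $(t,x)=(-R,0)$: let $Lv := (t+R)\partial_t v + x\cdot\nabla v + v$ be the generator of dilations about that vertex corrected by the dimensional shift $v$ (the natural multiplier in $n=3$). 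I would multiply the equation by $\Re\,\overline{Lv}$, integrate over the truncated solid cone $K_{R}([a,b])$, and integrate by parts.

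The central computation is a divergence identity: $\Re(\overline{Lv}(\partial_{tt}v-\Delta v))$ equals the spacetime divergence of an explicit current $Q=(Q^0,Q^1,Q^2,Q^3)$ built by contracting the energy--momentum tensor of $v$ with the dilation vector field $((t+R),x)$ plus the lower-order contribution of the $+v$ term, up to a bulk remainder. Because dilation is conformal Killing, the field part of this remainder reduces to a multiple of the trace of the energy--momentum tensor; combining it with the potential contribution $\Re(\overline{Lv}\,|v|^{p-1}v)$ collapses the whole bulk into $c_p\int_{K_{R}([a,b])}|v|^{p+1}\,dz$ with $c_p$ a positive multiple of $(5-p)$. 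Since $p\le 5$ this term has the favorable sign and is discarded, which is exactly the defocusing, subconformal mechanism that produces decay. Integrating the divergence over $K_R([a,b])$ then reduces everything to boundary integrals over $\{t=b\}$, $\{t=a\}$ and the lateral surface $\partial K_R([a,b])$, plus the retained term $\frac{1}{b+R}\Re\int_{K_R([a,b])}\overline{Lv}\,\mathcal{E}\,dz$.

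It then remains to read off the three boundary contributions. On $\{t=a\}$ the current $Q^0$ is bounded, after Cauchy--Schwarz and using $|x|\le a+R$ on the base of the cone, by a multiple of $(a+R)\,E(Iu(a))$, producing the first term $\frac{a+R}{b+R}E(Iu(a))$ after I divide through by $(b+R)$. On the lateral boundary $\partial K_R([a,b])$ one has $t+R=|x|$, so the spatial and temporal derivative pieces of the current organize into the null combination $x\cdot\nabla v+(t+R)\partial_t v+v=Lv$; the boundary integrand then packages into the perfect square $\frac{1}{\sqrt2}\,\frac{|Lv|^2}{t+R}$ up to terms of favorable sign that are dropped, which is precisely the stated flux-type term. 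On $\{t=b\}$ the current $Q^0$ contains the weighted potential density $(b+R)\frac{1}{p+1}|v(b)|^{p+1}$; the conformal arrangement of the multiplier, together with the discarded bulk term, converts the kinetic and gradient pieces so that the net contribution at $t=b$ is bounded below by a positive multiple of $(b+R)\int_{|x|\le b+R}|v(b)|^{p+1}\,dx$. Dividing by $(b+R)$ yields (\ref{Eqn:DecayPot}).

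The main obstacle is the bookkeeping at the $t=b$ slice: one must verify the exact coefficients of the $n=3$ scaling identity and check that, after adding the good-sign non-conformal bulk term and the lateral square, the kinetic and gradient contributions at $t=b$ do not obstruct extracting the clean, unweighted potential integral $\int_{|x|\le b+R}|v(b)|^{p+1}\,dx$ with a positive constant. A secondary point requiring care, but no more than careful integration by parts, is that because $I$ does not commute with $|u|^{p-1}u$, the commutator $\mathcal{E}$ is genuinely nonzero and is exactly what remains as the last term on the right-hand side of (\ref{Eqn:DecayPot}); all of the eventual decay gain is therefore conditional on later estimates controlling $\Re\int_{K_R([a,b])}\overline{Lv}\,\mathcal{E}\,dz$.
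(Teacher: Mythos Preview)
Your approach is essentially the paper's: multiply the mollified equation by the dilation multiplier $Lv=(t+R)\partial_t v+x\cdot\nabla v+v$, integrate over the truncated cone $K_R([a,b])$, and read off the three boundary pieces plus the commutator term. The paper in fact first translates time so that $R=0$ and then carries out exactly this Shatah--Struwe computation.

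Two corrections to your bookkeeping. First, the bulk remainder from the nonlinearity is $\frac{p-3}{p+1}\int_{K_R([a,b])}|v|^{p+1}$, not a multiple of $(5-p)$; it is nonnegative precisely because the paper assumes $p\geq 3$, and it is then discarded. Second, the lateral boundary does \emph{not} reduce to the perfect square plus terms of favorable sign: the paper's explicit computation shows
\[
\frac{1}{\sqrt 2}\int_{\partial K_R([a,b])}\Bigl(P+Q\cdot\tfrac{x}{|x|}\Bigr)\,d\sigma
=\frac{1}{\sqrt 2}\int_{\partial K_R([a,b])}\frac{|Lv|^2}{t+R}\,d\sigma
-\int_{|x|=b+R}\frac{|v|^2}{2}\,d\sigma+\int_{|x|=a+R}\frac{|v|^2}{2}\,d\sigma,
\]
and these sphere integrals have no sign by themselves. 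They cancel against matching sphere terms that appear when you bound $H(a)$ above and $H(b)$ below; for $H(a)$ this requires Hardy's inequality (to control $\int |v|^2/|x|^2$ by $\int|\nabla v|^2$), not merely Cauchy--Schwarz with $|x|\le a+R$. Once you track these cancellations, your outline goes through exactly as the paper's proof.
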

The third proposition, proved in Section \ref{Sec:EstInt}, in an estimate of an integral

\begin{prop}{(Estimate of integral).}
Let $J$ be an interval and let $w$ be a function. Then
\begin{equation}
\begin{array}{ll}
\| I (|w|^{p-1} w) - |Iw|^{p-1} Iw \|_{L_{t}^{1} L_{x}^{2} (J)} & \lesssim \frac{Z^{p}(J,w) |J|^{+}}{N^{\frac{5-p}{2}-}}.
\end{array}
\label{Eqn:EstIw3}
\end{equation}
\label{Prop:EstIw3}
\end{prop}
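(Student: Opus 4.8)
The plan is to set $v := Iw$ and to exploit that the multiplier $m_{N}$ is \emph{exactly} equal to $1$ on frequencies $|\xi| \leq N$, so that $IP_{\ll N} = P_{\ll N}$. Writing $w = w_{\ll} + w_{\gtrsim}$ with $w_{\ll} := P_{\ll N} w$ and $w_{\gtrsim} := P_{\gtrsim N} w$, and setting $F(z) := |z|^{p-1}z$, I would telescope the commutator into three pieces:
$$ I(|w|^{p-1}w) - |Iw|^{p-1}Iw = \underbrace{(I-1)F(w_{\ll})}_{(\mathrm{i})} + \underbrace{I\bigl[F(w) - F(w_{\ll})\bigr]}_{(\mathrm{ii})} - \underbrace{\bigl[F(w_{\ll} + Iw_{\gtrsim}) - F(w_{\ll})\bigr]}_{(\mathrm{iii})}, $$
using $Iw = w_{\ll} + Iw_{\gtrsim}$. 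The point of this splitting is that $(\mathrm{i})$ isolates the failure of $I$ to commute with the nonlinearity on purely low-frequency inputs, while $(\mathrm{ii})$ and $(\mathrm{iii})$ each carry an \emph{explicit} high-frequency factor.

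For $(\mathrm{ii})$ and $(\mathrm{iii})$ I would use the fundamental theorem of calculus. Since $p \geq 3$, the map $F$ is $C^{2}$ with $F'$, $F''$ of controlled polynomial growth, so, writing the holomorphic and antiholomorphic derivatives $F_{z}, F_{\bar z}$, one has
$$ F(w) - F(w_{\ll}) = \int_{0}^{1}\Bigl[ F_{z}(w_{\ll}+\sigma w_{\gtrsim})\, w_{\gtrsim} + F_{\bar z}(w_{\ll}+\sigma w_{\gtrsim})\, \overline{w_{\gtrsim}}\Bigr]\,d\sigma, $$
and similarly for $(\mathrm{iii})$ with $w_{\gtrsim}$ replaced by $Iw_{\gtrsim}$. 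This exhibits one high-frequency factor $w_{\gtrsim} = P_{\gtrsim N}w$ (resp. $Iw_{\gtrsim}$) and $p-1$ ``bulk'' factors. I would then apply H\"older in $x$ to distribute the $p$ factors over admissible Strichartz pairs, H\"older in $t$ to create the $|J|^{+}$ room, and bound every factor by $Z(J,w)$, producing $Z^{p}(J,w)|J|^{+}$. The $N$-gain comes entirely from the high-frequency factor: on $|\xi| \gtrsim N$ one has $m_{N}^{-1}(\xi) \sim N^{-(1-s)}|\xi|^{1-s}$, so $P_{\gtrsim N}w = P_{\gtrsim N}I^{-1}(Iw)$ converts into $N^{-(1-s)}$ times a $D^{1-m}Iw$-type norm controlled by $Z$; balancing this against the $N^{1-s}$ that the $I$-normalization attaches to the remaining factors, and choosing the $m$-admissible exponents optimally, should yield exactly $N^{-(\frac{5-p}{2}-)}$.

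For $(\mathrm{i})$ I would argue directly on the multiplier. Here $(I-1)F(w_{\ll})$ is Fourier-supported on $|\xi| \gtrsim N$ while $F(w_{\ll})$ is built from inputs of frequency $\leq N/128$; reaching output frequency $\gtrsim N$ therefore forces many low-frequency inputs to align. Using the smoothness and decay of $m_{N}-1$ together with a fractional Leibniz (Kato--Ponce) estimate and Bernstein, I would distribute derivatives so that the Paley--Littlewood sum converges (thanks to the frequency gap) and reproduces the same $Z^{p}(J,w)|J|^{+}N^{-(\frac{5-p}{2}-)}$ bound.

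The main obstacle is the non-polynomial nonlinearity: for non-integer $p$ there is no finite multilinear expansion and no clean symbol $1 - m_{N}(\textstyle\sum_i\xi_i)/\prod_{i} m_{N}(\xi_{i})$ to estimate, as in the integer case of \cite{almckstt}. The fundamental-theorem-of-calculus representation circumvents this and is legitimate precisely because $p \geq 3$ makes $F$ twice differentiable with controlled growth. The genuinely delicate part is the bookkeeping: matching the three families of Strichartz norms encoded in $Z$ (over all $m$-wave-admissible pairs and all $m \in [0,1]$) so that the derivative count, the frequency gains, and the time-H\"older loss conspire to give \emph{exactly} the gain $N^{-(\frac{5-p}{2}-)}$ and the factor $|J|^{+}$; the ``$-$'' and ``$+$'' epsilons are the slack absorbed in this step.
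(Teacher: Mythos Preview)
Your decomposition is correct and is essentially the one the paper uses: the paper first writes $IF(w)-F(Iw)=[IF(w)-F(w)]+[F(w)-F(Iw)]$, expands the first bracket by the same fundamental-theorem-of-calculus identity you wrote (yielding exactly your term (i) plus the analogue of (ii)), and handles the second bracket by $F(w)-F(Iw)=(w-Iw)\,O(|w|^{p-1}+|Iw|^{p-1})$, which is your (iii) in disguise since $w-Iw=P_{\gtrsim N}(1-I)w$.

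The one place where the paper is cleaner than your sketch is term (i). You propose Kato--Ponce and a Paley--Littlewood summation exploiting a ``frequency gap''; this is unnecessary. Since $(I-1)$ is supported on $\{|\xi|\gtrsim N\}$, a single Bernstein step gives
\[
\|(I-1)F(w_{\ll})\|_{L^{1}_{t}L^{2}_{x}}\;\lesssim\;\|P_{\gtrsim N}F(w_{\ll})\|_{L^{1}_{t}L^{2}_{x}}\;\lesssim\;N^{-1}\,\|\nabla F(w_{\ll})\|_{L^{1}_{t}L^{2}_{x}},
\]
and then the \emph{classical} chain rule $\nabla F(w_{\ll})=F'(w_{\ll})\nabla w_{\ll}$ (available because $F\in C^{1}$) plus H\"older with the specific exponents $L_{t}^{\frac{4(p-1)}{7-p}}L_{x}^{\frac{4(p-1)}{p-3}}$ on the $p-1$ bulk factors and $L_{t}^{\frac{4}{p-3}}L_{x}^{\frac{4}{5-p}}$ on $\nabla w_{\ll}$, followed by Bernstein to trade $\nabla\mapsto N^{(p-3)/2}D^{1-\frac{p-3}{2}}I$, yields the stated $N^{-(\frac{5-p}{2}-)}$. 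No fractional Leibniz is needed, and there is no ``many inputs aligning'' issue: $F(w_{\ll})$ need not be frequency-localized, but that is irrelevant once one passes to $\nabla F(w_{\ll})$.
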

The fourth proposition, proved in Section \ref{Sec:SpatSmooth}, shows that if a function
is localized, then its smoothness is also more or less localized:

\begin{prop}{(Spatial concentration of smoothness).}
Let $(R_{0},L,R_{0}') \in (\mathbb{R}^{+})^{3}$ such that $L \geq N$ and
$R_{0}^{'} - R_{0}  \geq  \frac{L}{N}$. Let $q \geq 1$. Let $f$ be a smooth function supported on the ball $B(O,R_{0})$. Then

\begin{equation}
\begin{array}{ll}
\| I f \|_{L^{q}(|x| \geq R_{0}^{'})} & \lesssim_{\infty -} \frac{1}{N^{\infty-}} \| I f \|_{L^{q}},
\end{array}
\label{Eqn:SpatSmooth1}
\end{equation}
and

\begin{equation}
\begin{array}{ll}
\| \nabla I f \|_{L^{2} (|x| \geq R_{0}^{'})} & \lesssim_{\infty-} \frac{1}{N^{\infty-}}
\left( \| I f \|_{L^{2}} + \| \nabla I f \|_{L^{2}} \right).
\end{array}
\label{Eqn:SpatSmooth2}
\end{equation}
In particular, if $R_{0}':=R_{0}+1$, then

\begin{equation}
\begin{array}{ll}
\| I f \|_{L^{q}} & \sim  \| I f \|_{L^{q} (|x| \leq R_{0}^{'})}.
\end{array}
\label{Eqn:SpatSmooth3}
\end{equation}

\label{Prop:SpatSmooth}
\end{prop}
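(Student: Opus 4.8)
The plan is to reduce both estimates to a single pointwise decay bound for the convolution kernel of $I$, and then to exploit the spatial separation between the support of $f$ and the region $\{|x| \ge R_{0}'\}$. Write $m_{N}(\xi)=\eta(\xi/N)$, so that $I$ is convolution with $K_{N}(x)=N^{3}k(Nx)$, where $k$ denotes the inverse Fourier transform of $\eta$. The first and central step is to show that $k$ decays rapidly away from the origin, namely that for every integer $M$ and every $|z|\ge 1$ one has $|k(z)|\lesssim_{M}|z|^{-2M}$. The difficulty is that $\eta$ is \emph{not} Schwartz: it only decays like $|\xi|^{s-1}$, so one cannot integrate by parts with $(1-\Delta_{\xi})^{M}$, whose leading term reproduces the non-integrable $\eta$ itself. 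Instead I would use the pure Laplacian, $\Delta_{\xi}^{M}e^{iz\cdot\xi}=(-1)^{M}|z|^{2M}e^{iz\cdot\xi}$, integrate by parts (the boundary terms vanish since $\eta$ and its derivatives decay), and bound
\begin{equation}
|z|^{2M}\,|k(z)| \;\lesssim\; \int_{\mathbb{R}^{3}}|\Delta_{\xi}^{M}\eta(\xi)|\,d\xi .
\nonumber
\end{equation}
Because $\eta\equiv 1$ near the origin and $\Delta_{\xi}^{M}\eta(\xi)\sim|\xi|^{s-1-2M}$ for large $|\xi|$, this integral converges for every $M\ge 2$ (here $s<1$ is used), which yields the claimed rapid decay and hence $|K_{N}(x)|\lesssim_{M}N^{3}(N|x|)^{-2M}$ whenever $N|x|\ge 1$. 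This kernel estimate is the technical heart of the proposition, and I expect it to be the main obstacle.

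With the kernel bound in hand, (\ref{Eqn:SpatSmooth1}) follows from the geometry. For $|x|\ge R_{0}'$ and $y$ in the support of $f$ one has $|x-y|\ge R_{0}'-R_{0}\ge L/N$, hence $N|x-y|\ge L$. Splitting the kernel bound as $N^{3}(N|x-y|)^{-2M}\le L^{-M}\,N^{3}(N|x-y|)^{-M}$ extracts the gain $L^{-M}$ and leaves a kernel that is in $L^{1}$ once $M>3$, with $L^{1}$-norm $\lesssim 1$ uniformly in $N$. Young's inequality then gives a tail bound $\lesssim_{M}L^{-M}\|f\|_{L^{q}}$, that is, the desired $L^{\infty-}$ decay. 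To produce the precise right-hand side, I would realize this through a commutator: choosing a smooth cutoff $\chi$ with $\chi\equiv 1$ on $B(0,R_{0}+\tfrac{L}{2N})$ and $\operatorname{supp}\chi\subset B(0,R_{0}')$, the identity $\chi f=f$ gives $If=\chi\,If+[I,\chi]f$, and on $\{|x|\ge R_{0}'\}$ the first term vanishes, so $If=[I,\chi]f$ there; the kernel $K_{N}(x-y)(\chi(y)-\chi(x))$ of $[I,\chi]$ is supported at distances $\gtrsim L/N$ and inherits the rapid decay, so the same Schur/Young argument applies and the remaining fixed norm of $f$ is expressed through $If$.

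For (\ref{Eqn:SpatSmooth2}) I would commute the gradient past the same cutoff: $\nabla If=\nabla(\chi\,If)+\nabla([I,\chi]f)$, and on $\{|x|\ge R_{0}'\}$ both $\chi$ and $\nabla\chi$ vanish, so $\nabla If=\nabla[I,\chi]f$ there. The kernel of $\nabla[I,\chi]$ is again supported at distances $\gtrsim L/N$ and inherits the rapid decay of $\nabla K_{N}$, so its $L^{2}$ tail is $\lesssim_{M}L^{-M}$ times a fixed norm. Writing $\nabla If=I\nabla f$ and integrating by parts to move the derivative onto the kernel, the undifferentiated contribution is controlled by the near-support quantity $\|If\|_{L^{2}(|x|\le R_{0})}$ and the differentiated contribution by $\|\nabla If\|_{L^{2}}$, which produces exactly the two terms on the right of (\ref{Eqn:SpatSmooth2}). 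Finally, (\ref{Eqn:SpatSmooth3}) is immediate from (\ref{Eqn:SpatSmooth1}): taking $L=N$ (legitimate since $R_{0}'-R_{0}=1=L/N$ and $L\lesssim N$), the tail is bounded by $C_{M}N^{-M}\|If\|_{L^{q}}$, which for $N\gg 1$ is absorbed into the left-hand side, giving the equivalence $\|If\|_{L^{q}}\sim\|If\|_{L^{q}(|x|\le R_{0}')}$. Once the Step~1 kernel decay for the non-Schwartz symbol $\eta$ is established, the separation at scale $L/N$ together with Young's and Schur's inequalities does the rest.
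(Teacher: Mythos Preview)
Your kernel decay for $K_{N}=N^{3}\check\eta(N\cdot)$ is correct and is a perfectly good starting point: the integration by parts with $\Delta_{\xi}^{M}$ does give $|k(z)|\lesssim_{M}|z|^{-2M}$, and the spatial separation $|x-y|\ge L/N$ then yields
\[
\|If\|_{L^{q}(|x|\ge R_{0}')}\ \lesssim_{M}\ L^{-M}\,\|f\|_{L^{q}}.
\]
The gap is in the last step, where you want to replace $\|f\|_{L^{q}}$ by $\|If\|_{L^{q}}$. The commutator device you propose does not accomplish this: on $\{|x|\ge R_{0}'\}$ one has $\chi(x)=0$ and $\chi(y)f(y)=f(y)$, so $[I,\chi]f(x)=\int K_{N}(x-y)f(y)\,dy=If(x)$ identically, and you are back to the same integral with $f$, not $If$, inside. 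There is no uniform bound $\|f\|_{L^{q}}\lesssim\|If\|_{L^{q}}$, since $I$ damps the high frequencies of $f$ by an arbitrarily large factor; the constants in $\lesssim_{\infty-}$ are allowed to depend on $\|(u_{0},u_{1})\|_{H^{s}\times H^{s-1}}$ and $R$ but not on $f$. The same issue recurs in your argument for $\nabla If$, where the extra factor of $N$ coming from $\nabla K_{N}$ compounds the problem when $L$ is not comparable to $N$.

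This is precisely why the paper does not argue at the level of the single kernel $K_{N}$. Instead it performs a Littlewood--Paley decomposition of both the output and the input, writing $If=P_{\ll N}I(\chi_{R_{0}}f)+P_{\gtrsim N}I(\chi_{R_{0}}f)$ and then splitting $f$ itself into $P_{\ll N}f$, $P_{M}f$ with $M\sim N$, and $P_{M}f$ with $M\gg N$. For the low/matched pieces one has $P_{\ll N}f=P_{\ll N}If$ and $P_{M}f\sim P_{M}If$, so a direct spatial-separation estimate (the paper's Lemma) already lands on $\|If\|_{L^{q}}$. For the mismatched pieces $P_{\ll N}(\chi_{R_{0}}\tilde P_{M}P_{M}f)$ with $M\gg N$ (and the analogous $P_{M_{1}}I(\chi_{R_{0}}\tilde P_{M_{2}}P_{M_{2}}f)$ with $M_{2}\gg M_{1}$), the kernel of the \emph{composed} operator carries an extra factor $(MR_{0})^{-\infty}$ coming from the rapid decay of $\widehat{\chi_{R_{0}}}$ across the frequency gap; this decay absorbs the loss $(M/N)^{1-s}$ incurred when passing from $\|P_{M}f\|_{L^{q}}$ to $\|P_{M}If\|_{L^{q}}$ and makes the dyadic sum converge (here the hypothesis $R_{0}>N^{-(1-)}$ is used). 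Your single-kernel Young estimate sees none of this frequency-separation gain, which is the missing ingredient.
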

The last proposition, proved in Section \ref{Sec:FinalDecay}, shows that, for a large class of potential
terms of mollified energies defined by (\ref{Eqn:ChoiceN0}), the decay is total. The proof uses
the results from Proposition \ref{Prop:DecayPot}, Proposition \ref{Prop:EstIw3}, Proposition \ref{Prop:SpatSmooth} and finite
speed of propagation.

\begin{prop}{(Final decay estimate for a class of potential terms of mollified energies).}
Let $u$ be a solution of (\ref{Eqn:NlWdat}) on $[0,T]$. Assume that

\begin{equation}
\begin{array}{l}
N_{0}^{\frac{5-p}{2} -\frac{(1-s)(p + 1)}{1 -\theta}  } \gg  T^{1+}.
\end{array}
\label{Eqn:ChoiceN0}
\end{equation}
Let $t \in [0,T]$. Then we have

\begin{equation}
\begin{array}{ll}
\int_{|x| \leq R' + t} |I_{N_{0}} u(t,x)|^{p+1} \, dx & \lesssim \frac{R'}{R'+t} N_{0}^{2(1-s)}.
\end{array}
\label{Eqn:DecayPot2}
\end{equation}
\label{Prop:DecayPot2}
\end{prop}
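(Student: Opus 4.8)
The goal is to upgrade the partial decay estimate of Proposition \ref{Prop:DecayPot} into the full decay estimate (\ref{Eqn:DecayPot2}) by showing that the two ``error'' contributions on the right-hand side of (\ref{Eqn:DecayPot}) — the boundary flux term on $\partial K_{R}([a,b])$ and the commutator integral over $K_{R}([a,b])$ — are dominated by the leading term $\frac{R'}{R'+t} N_{0}^{2(1-s)}$, under the smallness hypothesis (\ref{Eqn:ChoiceN0}) on $N_{0}$. First I would apply Proposition \ref{Prop:DecayPot} on the interval $[a,b]=[0,t]$ (with $R$ replaced by $R'=R+1$ to match the support hypotheses), so that the first term becomes $\frac{R'}{R'+t} E(I_{N_{0}} u(0))$, which by Proposition \ref{prop:BoundInitNrj} is $\lesssim \frac{R'}{R'+t} N_{0}^{2(1-s)}$ — exactly the target. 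Everything then reduces to showing the other two terms are of lower or comparable order.

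Before estimating the errors, I would establish the a priori energy bound $\sup_{t\in[0,T]} E(I_{N_{0}} u(t)) \lesssim N_{0}^{2(1-s)}$ via a bootstrap/continuity argument. The engine is Proposition \ref{Prop:Boundedness}: partition $[0,T]$ into subintervals $J$ satisfying the smallness conditions (\ref{Eqn:Cond11}) and (\ref{Eqn:Cond12}), on each of which $Z(J,u)\lesssim N_{0}^{1-s}$. Hypothesis (\ref{Eqn:ChoiceN0}) is precisely what guarantees that the number of such intervals needed to cover $[0,T]$ times the per-interval energy increment stays $\ll N_{0}^{2(1-s)}$, closing the bootstrap. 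The per-interval energy increment is controlled by Proposition \ref{Prop:EstIw3}: pairing the commutator $I(|u|^{p-1}u)-|Iu|^{p-1}Iu$ against $\partial_{t} Iu$ in $L^{1}_{t}L^{2}_{x}$ and summing over intervals gives a total variation $\lesssim \frac{Z^{p}|J|^{+}}{N_{0}^{\frac{5-p}{2}-}}$ per interval, whose sum is $\ll N_{0}^{2(1-s)}$ exactly when (\ref{Eqn:ChoiceN0}) holds.

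For the two error terms in (\ref{Eqn:DecayPot}) I would proceed as follows. The flux term $\frac{1}{\sqrt{2}(b+R)}\int_{\partial K_{R}}\frac{|\nabla Iu\cdot x+(t+R)\partial_{t}Iu+Iu|^{2}}{t+R}\,d\sigma$ is nonnegative and, by finite speed of propagation together with the spatial concentration of smoothness (\ref{Eqn:SpatSmooth1})--(\ref{Eqn:SpatSmooth3}) from Proposition \ref{Prop:SpatSmooth}, is controlled by the energy flux through the cone boundary. Here is where localization is essential: since $u$ itself (not $I_{N_{0}}u$) is genuinely supported inside the cone $K_{R}$, the true flux vanishes, and the only contribution to the flux of $I_{N_{0}}u$ comes from the tail that $I_{N_{0}}$ leaks outside the support, which Proposition \ref{Prop:SpatSmooth} bounds by $N_{0}^{-\infty}$-type factors — negligible against every other term. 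The commutator integral over $K_{R}([0,t])$, after a Cauchy--Schwarz splitting of the multiplier $\overline{(t+R)\partial_{t}Iu+x\cdot\nabla Iu+Iu}$ into its $\partial_{t}Iu$, $\nabla Iu$, and $Iu$ pieces (each $L^{2}$-controlled by $E^{1/2}(I_{N_{0}}u)\lesssim N_{0}^{1-s}$ via the a priori bound), is estimated using Proposition \ref{Prop:EstIw3} with $w=u$; the weight $(t+R)$ is absorbed by the prefactor $\frac{1}{b+R}$.

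The hard part will be handling the weight $(t+R)$ inside the commutator integral simultaneously with the time-summation over the partition. Because the multiplier carries a factor growing linearly in $t$, a naive bound loses a power of $T$, and one must exploit that this growth is divided out by the $\frac{1}{b+R}$ prefactor while carefully tracking how the $|J|^{+}$ factors from Proposition \ref{Prop:EstIw3} accumulate across the $\sim \langle T\rangle^{1+}/N_{0}^{(\ldots)}$ many subintervals. Matching these exponents so that the total stays below $\frac{R'}{R'+t}N_{0}^{2(1-s)}$ is exactly the computation that forces the threshold (\ref{Eqn:ChoiceN0}); verifying that the arithmetic of exponents closes — rather than the individual estimates, which are all in hand — is the real obstacle.
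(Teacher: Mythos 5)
Your overall architecture matches the paper's: apply Proposition \ref{Prop:DecayPot} on $[0,t]$, kill the flux term by combining finite speed of propagation with Proposition \ref{Prop:SpatSmooth} (the paper implements this by integrating the energy identity over the exterior region $K^{c}_{R'}([0,\tilde{T}])$, so that the flux equals $E_{R',ext}(I_{N_{0}}u_{0})$ plus an exterior commutator integral, both $\lesssim_{\infty-}\frac{1}{N_{0}^{\infty-}}$), and control the interior commutator term by Cauchy--Schwarz and Proposition \ref{Prop:EstIw3}. But there is a genuine structural gap in how you organize the bootstrap, and it sits precisely at the point you flag as ``the real obstacle''. You propose to establish the energy bound $\sup_{t}E(I_{N_{0}}u(t))\lesssim N_{0}^{2(1-s)}$ first and only then estimate the error terms of the decay estimate. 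The paper instead runs a \emph{joint} continuity argument on the pair consisting of the energy bound and the decay estimate (\ref{Eqn:DecayPot2}) itself (its set $F_{T}$ is defined by both inequalities simultaneously), and this is not cosmetic. The reason is the choice of the partition: condition (\ref{Eqn:Cond11}) of Proposition \ref{Prop:Boundedness} involves $\| I u\|_{L_{t}^{\infty}L_{x}^{p+1}(K_{R'}(J))}$, and if you only have the energy bound available this norm is $\lesssim N_{0}^{\frac{2(1-s)}{p+1}}$ uniformly in time, forcing subintervals of uniform length and hence $\sim\langle T\rangle^{1}N_{0}^{\frac{(1-s)(\theta(p-1)+2)}{1-\theta}+}$ of them --- exactly the count ``$\langle T\rangle^{1+}$ times an $N_{0}$-power'' that you quote. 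Feeding the bootstrapped decay $\int_{|x|\leq t+R'}|I_{N_{0}}u|^{p+1}\lesssim\frac{R'}{t+R'}N_{0}^{2(1-s)}$ back into (\ref{Eqn:Cond11}) lets the subinterval length grow like $\left(\frac{j+R'}{R'}\right)^{1-}$ on $J_{j}=[j-1,j]$, and the total count drops to $\langle T\rangle^{0+}N_{0}^{\frac{(1-s)(\theta(p-1)+2)}{1-\theta}+}$.

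That factor of $\langle T\rangle$ is exactly what decides whether the argument closes. The commutator sum $\sum_{j,k}X_{2,j,k}$ must be beaten not against $N_{0}^{2(1-s)}$ but against $\frac{R'}{R'+t}N_{0}^{2(1-s)}$, i.e. you must gain a full power of $T$. With the decay-adapted partition the requirement is $\langle T\rangle^{1+}N_{0}^{\frac{(1-s)(\theta(p-1)+2)}{1-\theta}}N_{0}^{(p-1)(1-s)}<<N_{0}^{\frac{5-p}{2}}$, which is precisely hypothesis (\ref{Eqn:ChoiceN0}); with your uniform partition the same computation requires $\langle T\rangle^{2+}<<N_{0}^{\frac{5-p}{2}-\frac{(1-s)(p+1)}{1-\theta}}$, strictly stronger than (\ref{Eqn:ChoiceN0}), which would degrade the exponents of Theorem \ref{Thm:Main} after optimizing in $N_{0}$. (The energy bootstrap alone does close with the uniform partition under (\ref{Eqn:ChoiceN0}); it is the decay estimate that fails.) So the missing idea is: bootstrap the decay estimate together with the energy, and use the decay at earlier times to lengthen the subintervals at later times. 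A minor additional imprecision: the $Iu$ component of the multiplier is not controlled in $L^{2}$ by $E^{\frac{1}{2}}(I_{N_{0}}u)$, since the energy contains no $L^{2}$ term; the paper uses H\"older on $\{|x|<t+R'\}$ and the $L^{p+1}$ bound, picking up a factor $(t+R')^{\frac{3(p-1)}{2(p+1)}}$ that the prefactor $\frac{1}{\tilde{T}+R'}$ absorbs because $p<5$.
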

We are now in position to prove Theorem \ref{Thm:Main}.\\
We fist estimate $\| P_{<1} u(T) \|^{2}_{H^{s}}$. We have, by finite speed of propagation
and (\ref{Eqn:SpatSmooth3})

\begin{equation}
\begin{array}{ll}
\| P_{< 1} u(T) \|^{2}_{H^{s}} & \lesssim \| I_{N_{0}} u(T) \|_{L^{2}}^{2} \\
& \sim \| I_{N_{0}} u(T) \|^{2}_{L^{2}(B(O,R'+T))} \\
& \lesssim T^{\frac{3p-5}{p+1}} N_{0}^{\frac{4(1-s)}{p+1}} \cdot
\end{array}
\end{equation}
(\ref{Eqn:MainIneqLow}) follows from optimizing the last inequality in $N_{0}$, in view of the constraint
(\ref{Eqn:ChoiceN0}). \\
Next we estimate $\| ( P_{>1} u(T), \partial_{t} u(T ) ) \|^{2}_{H^{s} \times H^{s-1}}$. We define

\begin{equation}
\begin{array}{ll}
F_{T} & = \left\{ t \in [0,T], \, \sup_{t \in [0,T]} E(I_{N_{1}}u(t)) \leq C N_{1}^{2(1-s)}  \right\}
\end{array}
\nonumber
\end{equation}
for $N_{1}$ such that

\begin{equation}
\begin{array}{ll}
N_{1} & = C N_{0}^{ \frac{2(1-s) } { \frac{2 \theta +  p-1 }{1-\theta} s
- \left( \frac{2 \theta + p-1}{1- \theta} - \frac{5-p}{2}  \right) } +}  \langle T \rangle^{+}
\end{array}
\label{Eqn:ChoiceN1}
\end{equation}
and for $C:=C( \| (u_{0},u_{1}) \|_{H^{s} \times H^{s-1}},R)$ fixed and large enough such that all the
estimates below are true. We claim that $F_{T}=[0,T]$. Indeed

\begin{itemize}

\item $0 \in F_{T}$

\item $F_{T}$ is closed by continuity

\item $F_{T}$ is open. Indeed let $\tilde{T} \in [0,T]$. Then there exists $\delta >0$ such that for all
$\tilde{T} \in (T^{'} - \delta, T^{'} + \delta) \cap [0,T]$ such that

\begin{equation}
\begin{array}{ll}
\sup_{t \in [0,\tilde{T}]} E(I_{N_{1}}u(t)) & \leq 2 C N_{1}^{2(1-s)}
\end{array}
\nonumber
\end{equation}
We aim at proving that in fact

\begin{equation}
\begin{array}{ll}
\sup_{t \in [0, \tilde{T}]} E(I_{N_{1}} u(t)) & \leq C N_{1}^{2(1-s)}
\end{array}
\label{Eqn:EstToproveNrj}
\end{equation}
We divide $[0,\tilde{T}]$ into subintervals $(J_{j}=[j-1,j])_{1 \leq j \leq J}$ and we partition
each $J_{j}$ into subintervals $J_{j,k}$ such that
$|J_{j,k}| = \epsilon^{'}  \left( \frac{R' + j}{R'} \right)^{1-} N_{1}^{\frac{(s-1) \theta (p+1) }{1 - \theta}-} N_{0}^{2(s-1)-}$,
with $\epsilon^{'}$  a fixed constant such that $\epsilon^{'} \ll \epsilon$ a fixed constant ($\epsilon$ is
the constant defined in Proposition \ref{Prop:Boundedness}), except maybe the last one. We see from (\ref{Eqn:SpatSmooth3}) and (\ref{Eqn:DecayPot2}) that

\begin{equation}
\begin{array}{ll}
\| I_{N_{1}} u \|^{(1- \theta) (p-1)}_{L_{t}^{\infty} L_{x}^{p+1} ( K_{R'}(J_{j,k}))} |J_{j,k}|^{\frac{(1-\theta)(p-1)}{p+1}+} N_{1}^{(1-s) \theta (p-1)+} \\
\lesssim  \| I_{N_{1}} u \|^{(1 - \theta) (p-1)}_{L_{t}^{\infty} L_{x}^{p+1} (J_{j,k}) } |J_{j,k}|^{\frac{(1-\theta)(p-1)}{p+1}+} N_{1}^{(1-s) \theta (p-1)+} \\
\lesssim \| I_{N_{0}} u \|^{(1- \theta) (p-1)}_{L_{t}^{\infty} L_{x}^{p+1} (J_{j,k}) } |J_{j,k}|^{\frac{(1-\theta)(p-1)}{p+1}+} N_{1}^{(1-s) \theta (p-1)+} \\
\lesssim \| I_{N_{0}} u \|^{(1 - \theta) (p-1)}_{L_{t}^{\infty} L_{x}^{p+1} (K_{R'} ( J_{j,k}) )} |J_{j,k}|^{\frac{(1-\theta)(p-1)}{p+1}+} N_{1}^{(1-s) \theta (p-1)+} \\
\leq \epsilon,
\end{array}
\end{equation}
 since $N_{1} \ll N_{0}$. By Proposition \ref{Prop:EstIw3} we see that

\begin{equation}
\begin{array}{ll}
|E(I_{N_{1}}u(\tilde{T})) - E(I_{N_{1}}u_{0})| & = \left| \int_{0}^{\tilde{T}} \int \Re \left( \overline{ \partial_{t} I_{N_{1}}u }
\left( |I_{N_{1}}u|^{p-1} I_{N_{1}} u - I_{N_{1}}(|u|^{p-1} u)  \right) \right) \, dx \, dt  \right| \\
& \lesssim \sum_{j,k} \| \partial_{t} I_{N_{1}}u \|_{L_{t}^{\infty} L_{x}^{2} (J_{j,k})} \|  I_{N_{1}}(|u|^{p-1} u)
- |I_{N_{1}}u|^{p-1} I_{N_{1}} u \|_{L_{t}^{1} L_{x}^{2} (J_{j,k})} \\
& \lesssim  \frac{N_{1}^{(p+1)(1-s)} N_{1}^{\frac{(1-s) \theta (p+1)}{1 - \theta}+  } N_{0}^{2(1-s)+}  T^{+} } {N_{1}^{\frac{5-p}{2}-}}  \\
& \ll  N_{1}^{2(1-s)},
\end{array}
\nonumber
\end{equation}
by our choice of $N_{1}$.
\end{itemize}
Therefore we see from this inequality and (\ref{Eqn:BoundInitNrj}) that (\ref{Eqn:EstToproveNrj}) holds. \\
Combining (\ref{Eqn:EstToproveNrj}) with (\ref{Eqn:ChoiceN0}) and (\ref{Eqn:ChoiceN1}) and
optimizing in $N_{0}$ we see that (\ref{Eqn:MainIneqHigh}) holds, by (\ref{Eqn:HighHsNrj}).

\section{Proof of Proposition \ref{Prop:DecayPot}}
\label{Sec:PartialDecay}

In this section we prove Proposition \ref{Prop:DecayPot}.\\
Letting $\tilde{u}$ be such that $\tilde{u}(t+R',x):=u(t,x)$, we see that we may assume, without loss of generality, that $R'=0$. \\

\begin{equation}
\begin{array}{l}
\Re \left( \overline{( t \partial_{t} I u + x \cdot \nabla Iu + Iu )} ( \partial_{tt} Iu - \triangle Iu  + I (|u|^{p-1} u))  \right)
=  \\
\Re \left( \overline{(t \partial_{t} I u + x \cdot \nabla Iu + Iu )}  ( \partial_{tt} Iu - \triangle Iu + |Iu|^{p-1} Iu) \right) \\
+ \Re \left( \overline{( t \partial_{t} Iu + x \cdot Iu + Iu)} ( I (|u|^{p-1} u) - |Iu|^{p-1} Iu ) \right) \\
= A_{1} + A_{2}
\end{array}
\nonumber
\end{equation}
We use an argument from Shatah-Struwe \cite{shatstruwe} to estimate $A_{1}$

\begin{equation}
\begin{array}{ll}
A_{1} & =  \partial_{t} P - \nabla \cdot Q + \frac{p-3}{p+1}  |Iu|^{p+1}
\end{array}
\nonumber
\end{equation}
with

\begin{equation}
\begin{array}{ll}
P & := \frac{t}{2} |\partial_{t} I u|^{2} + \frac{t}{2} |\nabla I u|^{2}
+ \Re \left( (x \cdot \nabla Iu) \overline{\partial_{t} Iu} \right) + t \frac{|Iu|^{p+1}}{p+1} + \Re \left( Iu \overline{\partial_{t} Iu} \right)
\end{array}
\nonumber
\end{equation}
and

\begin{equation}
\begin{array}{ll}
Q & := t \Re \left( \overline{\partial_{t} Iu} \nabla Iu \right) + \Re \left( (x \cdot \nabla Iu) \overline{\nabla Iu} \right)
- \frac{|\nabla Iu|^{2}}{2} x + \frac{|\partial_{t} I u|^{2}}{2} x - x \frac{|Iu|^{p+1}}{p+1} +  \Re \left( \overline{Iu} \nabla Iu \right)
\end{array}
\nonumber
\end{equation}
Integrating by part $A_{1} + A_{2} =0$ inside $K_{0}([a,b])$, we have

\begin{equation}
\begin{array}{l}
\frac{p-3}{p+1} \int_{K_{0}([a,b])} |Iu|^{p+1} \, dz + \int_{K_{0}([a,b])} A_{2} \, dz + H(b)  = H(a) + \frac{1}{\sqrt{2}} \int_{ \partial K_{0} ([a,b])} \left( P + Q \cdot \frac{x}{|x|}  \right) \, d \sigma
\end{array}
\label{Eqn:HtBound0}
\end{equation}
with

\begin{equation}
\begin{array}{ll}
H(t) & = \int_{|x| \leq t} P(t,x) \, dx \cdot
\end{array}
\nonumber
\end{equation}
We have

\begin{equation}
\begin{array}{ll}
H(t) & = \int_{|x| \leq t}
\left[
\begin{array}{l}
\frac{t}{2} \left( |\partial_{t} I u|^{2} + \left| \nabla I u + \frac{Iu x}{|x|^{2}} \right|^{2}   \right) \\
- t \nabla \cdot \left( \frac{|Iu|^{2} x}{2 |x|^{2}} \right)  + \Re \left( \overline{\partial_{t} Iu} ( I u + x \cdot \nabla I u ) \right) \\
+ t \frac{ |Iu|^{p+1}}{p+1}
\end{array}
\right] \, dx
\nonumber
\end{array}
\end{equation}
One one hand, by Hardy's inequality and integration by part, we see that

\begin{equation}
\begin{array}{ll}
H(t) +  \int_{|x| = t} \frac{|Iu|^{2}(t,x)}{2} \, d \sigma \lesssim t E(Iu(t))
\end{array}
\label{Eqn:HtBound1}
\end{equation}
On the other hand, since

\begin{equation}
\begin{array}{ll}
|\partial_{t} I u (Iu + x \cdot \nabla Iu) | & \leq \frac{t}{2}
\left( |\partial_{t} I u|^{2} + \left| \nabla I u + \frac{Iu x}{|x|^{2}}  \right|^{2}   \right), \, |x| \leq t
\end{array}
\nonumber
\end{equation}
we see, after integration by part, that

\begin{equation}
\begin{array}{ll}
t \int_{|x| \leq t} \frac{|Iu|^{p+1}}{p+1} \, dx - \int_{|x| = t} \frac{|Iu|^{2}}{2} \, d \sigma  \leq H(t)
\end{array}
\label{Eqn:HtBound2}
\end{equation}
and, on $\{ (t,x), \, t \in [a,b], \, t=|x| \}$ , we have

\begin{equation}
\begin{array}{ll}
P & = \frac{t}{2} |\partial_{t} I u|^{2} + \frac{t}{2} |\nabla I u|^{2} + \frac{t}{p+1} |Iu|^{p+1}
+ \Re \left( \overline{\partial_{t} Iu} (x \cdot \nabla Iu  + Iu) \right)  \\
& = \frac{t}{2} |\partial_{t} Iu|^{2} + \frac{t}{2} \left( |(Iu)_{r}|^{2}  + \frac{|(Iu)_{\bot}|^{2}}{r^{2}} \right)
+ \frac{t}{p+1} |Iu|^{p+1} + \Re \left( \overline{\partial_{t} Iu} (r (Iu)_{r} + Iu)  \right) \\
& = \frac{t}{2} |\partial_{t} Iu + (Iu)_{r}|^{2} + \frac{1}{2r} | (Iu)_{\bot}|^{2} + \frac{t}{p+1} |Iu|^{p+1} + \Re ( \overline{\partial_{t} Iu} Iu),
\end{array}
\nonumber
\end{equation}

\begin{equation}
\begin{array}{ll}
Q \cdot \frac{x}{|x|} & = t \Re \left( \overline{\partial_{t} Iu} (Iu)_{r} \right) + \frac{r}{2} |(Iu)_{r}|^{2} - \frac{|(I u)_{\bot}|^{2}}{2r}
+ \frac{r |\partial_{t} I u|^{2} }{2} - \frac{r |Iu|^{p+1}}{p+1} + \Re \left( \bar{Iu} (Iu)_{r} \right)
\end{array}
\nonumber
\end{equation}
and

\begin{equation}
\begin{array}{ll}
P + Q \cdot \frac{x}{|x|} & = t | \partial_{t} Iu  + (Iu)_{r} |^{2} + \Re \left( \bar{Iu} (\partial_{t} Iu + (Iu)_{r}) \right) \cdot
\end{array}
\nonumber
\end{equation}
Hence

\begin{equation}
\begin{array}{l}
\frac{1}{\sqrt{2}} \int_{\partial K_{0}([a,b])} P + Q \cdot \frac{x}{|x|} \, d \sigma   \\
=
\frac{1}{\sqrt{2} } \int_{ \partial K_{0} ([a,b])} \frac{ | \nabla I u \cdot x + t \partial_{t} Iu + Iu |^{2}}{t} \, d \sigma
- \frac{1}{\sqrt{2}} \int_{ \partial K_{0} ([a,b])}  \frac{ \Re \left( \bar{Iu} (t \partial_{t} Iu  + x \cdot \nabla Iu ) \right) + |Iu|^{2} }{t} \, d \sigma
\\
= \frac{1}{\sqrt{2} } \int_{ \partial K_{0} ([a,b])} \frac{ | \nabla I u \cdot x + t \partial_{t} Iu + Iu |^{2}}{t} \, d \sigma
- \int_{a \leq |y| \leq b} \nabla \cdot \left( \frac{|Iv|^{2} y}{2 |y|} \right) \, dy \\
= \frac{1}{\sqrt{2} } \int_{\partial K_{0} ([a,b])} \frac{ | \nabla I u \cdot x + t \partial_{t} Iu + Iu |^{2}}{t} \, d \sigma
-  \left( \int_{|y|=b} \frac{|Iu|^{2}(b,y)}{2} \, d \sigma - \int_{|y|=a} \frac{|Iu|^{2}(a,y)}{2} \, d \sigma \right),
\end{array}
\label{Eqn:HtBound3}
\end{equation}
where $v(y):=u(|y|,y)$. We conclude from (\ref{Eqn:HtBound0}), (\ref{Eqn:HtBound1}), (\ref{Eqn:HtBound2}) and (\ref{Eqn:HtBound3}) that
(\ref{Eqn:DecayPot}) holds (with $R=0$).


\section{Proof of Proposition \ref{Prop:Boundedness}}
\label{Sec:LocalBd}

In this section we prove Proposition \ref{Prop:Boundedness}. Throughout the proof we constantly use (\ref{Eqn:SpatSmooth3}).
Let $J=[a,b]$. Let

\begin{equation}
\begin{array}{ll}
\bar{Z}(J,u) & : = \sup \{ Z_{1,s}(J,u), Z_{s_{c},s}(J,u), Z_{ \frac{3}{2} - \frac{5}{2p} ,s }(J,u) \}
\end{array}
\nonumber
\end{equation}
First we estimate $Z_{1,s}(J,u)$. We have

\begin{equation}
\begin{array}{ll}
Z_{1,s}(J,u) & \lesssim \left\| (\partial_{t} I u(a), D I u(a))  \right\|_{L^{2}} + \| |u|^{p-1} u \|_{L_{t}^{1} L_{x}^{2}(J)} \\
& \lesssim N^{1-s} + \| |P_{\ll N} u|^{p-1} P_{ \ll N} u \|_{L_{t}^{1} L_{x}^{2}(J)} +
\| |P_{ \ll N} u|^{p-1} P_{\gtrsim N} u \|_{L_{t}^{1} L_{x}^{2} (J)} \\
& + \| | P_{\gtrsim N} u|^{p-1} P_{\ll N} u \|_{L_{t}^{1} L_{x}^{2} (J)} + \| |P_{\gtrsim N} u|^{p-1} P_{\gtrsim N} u \|_{L_{t}^{1} L_{x}^{2}(J)} \\
& \lesssim  N^{1-s} + A_{1} + A_{2} + A_{3} + A_{4} \cdot
\end{array}
\end{equation}
We deduce from our choice of $R'$, Bernstein's inequality, (\ref{Eqn:Cond11}) and (\ref{Eqn:SpatSmooth3}) that

\begin{equation}
\begin{array}{ll}
A_{1} & \lesssim \| P_{\ll N} u \|^{p-1}_{ L_{t}^{2(p-1)} L_{x}^{2(p-1)} (J)} \| P_{\ll N} u \|_{L_{t}^{2} L_{x}^{\infty} (J)} \\
& \lesssim N^{+} |J|^{+} \| P_{\ll N} u \|^{\theta (p-1)}_{L_{t}^{8} L_{x}^{8} (J)}  \| P_{\ll N} u \|^{(1- \theta)(p-1)}_{L_{t}^{p+1} L_{x}^{p+1} (J)}
\| P_{\ll N} u \|_{L_{t}^{2+} L_{x}^{\infty-} (J) } \\
& \lesssim  N^{+} |J|^{\frac{(1- \theta) (p-1)} {p+1} +} \| I u \|^{\theta(p-1)}_{L_{t}^{8} L_{x}^{8}(J)}
\| I u \|^{(1- \theta)(p-1)}_{L_{t}^{\infty} L_{x}^{p+1} (J) } \| I u \|_{L_{t}^{2+} L_{x}^{\infty-} (J) } \\
& \lesssim N^{+} |J|^{\frac{(1- \theta) (p-1)} {p+1} +} \| I u \|^{\theta(p-1)}_{L_{t}^{8} L_{x}^{8}(J)}
\| I u \|^{(1- \theta)(p-1)}_{L_{t}^{\infty} L_{x}^{p+1} ( K_{R'} (J) ) } \| I u \|_{L_{t}^{2+} L_{x}^{\infty-} (J) }  \\
& \lesssim o \left( \frac{ \bar{Z}^{\theta (p-1)+ 1}(J,u) } {N^{(1-s) \theta(p-1)}} \right)
\end{array}
\nonumber
\end{equation}

\begin{equation}
\begin{array}{ll}
A_{2} & \lesssim  \| P_{\ll N} u \|^{p-1}_{L_{t}^{p-1} L_{x}^{\frac{6(p-1)}{p-3} } (J) }
\| P_{\gtrsim N} u \|_{L_{t}^{\infty} L_{x}^{\frac{6}{6-p}} (J)} \\
& \lesssim N^{+} |J|^{+}    \| P_{\ll N} u \|^{p-1}_{L_{t}^{(p-1)+} L_{x}^{\frac{6(p-1)}{p-3} - } (J) }
\frac{ \| D I u \|_{L_{t}^{\infty} L_{x}^{2}(J)} }{N^{ \frac{5-p}{2}-}} \\
& \lesssim |J|^{+} N^{(1-s)+} \frac{\bar{Z}^{p-1}(J,u)}{N^{\frac{5-p}{2}-}}
\end{array}
\nonumber
\end{equation}

\begin{equation}
\begin{array}{ll}
A_{3} & \lesssim \| P_{\gtrsim N} u \|^{p-1}_{L_{t}^{2(p-1)} L_{x}^{2(p-1)}(J)}  \| P_{\ll N} u \|_{L_{t}^{2} L_{x}^{\infty} (J) } \\
& \lesssim N^{+} |J|^{+} \frac{1}{N^{\frac{5-p}{2}-}}
\| D^{1-s_{c}} I u \|^{p-1}_{L_{t}^{2(p-1)} L_{x}^{2(p-1)} (J) }  \| P_{\ll N} u \|_{L_{t}^{2+} L_{x}^{\infty -} (J)} \\
& \lesssim  \frac{|J|^{+}}{N^{\frac{5-p}{2}-}}  \bar{Z}^{p}(J,u) \\
\end{array}
\nonumber
\end{equation}
and

\begin{equation}
\begin{array}{ll}
A_{4} & \lesssim   \| P_{\gtrsim N} u \|^{p}_{L_{t}^{p} L_{x}^{2p} (J)} \\
& \lesssim  \frac{ \| D^{1- \left( \frac{3}{2} -\frac{5}{2p} \right)  } I u \|^{p}_{L_{t}^{p} L_{x}^{2p} (J)} }  {N^{\frac{5-p}{2} -}} \\
& \lesssim \frac{ Z_{ \frac{3}{2} - \frac{5}{2p} ,s }^{p} (J,u) }{N^{\frac{5-p}{2} -}}
\end{array}
\nonumber
\end{equation}
Then we estimate $Z_{m,s}(J,u)$ for $m \in \left\{ s_{c} , \frac{3}{2} - \frac{5}{2p}  \right\} $. We have

\begin{equation}
\begin{array}{ll}
Z_{m,s}(J,u) & \lesssim \left\| (\partial_{t} I u(a), D I u(a))  \right\|_{L^{2}} + \| D^{1-m} I (|u|^{p-1} u) \|_{L_{t}^{\frac{2}{1+m}} L_{x}^{\frac{2}{2-m}} (J)} \\
& \lesssim N^{1-s} + \| D^{1-m} I u \|_{L_{t}^{\frac{2}{m}} L_{x}^{\frac{2}{1-m}} (J) }
\| u \|^{p-1}_{L_{t}^{2(p-1)} L_{x}^{2(p-1)}(J)} \\
& \lesssim N^{1-s} + Z_{m,s}(J,u) \left( \| P_{\ll N} u \|^{p-1}_{L_{t}^{2(p-1)} L_{x}^{2(p-1)}(J)}  +
\| P_{\gtrsim N} u \|^{p-1}_{L_{t}^{2(p-1)} L_{x}^{2(p-1)} (J)} \right) \\
& \lesssim N^{1-s} +  o \left( \frac{ \bar{Z}^{\theta (p-1) +1}(J,u) } {N^{(1-s) \theta(p-1)  }} \right)
+ \frac{|J|^{+}}{N^{\frac{5-p}{2}-}}  \bar{Z}^{p}(J,u)
\end{array}
\nonumber
\end{equation}
where, at the last line, we used similar arguments to estimate $A_{1}$ and $A_{3}$.\\
Now, by combining all the estimates above and by a continuity argument, we see that $ \bar{Z}(J,u)  \lesssim N^{1-s} $. \\
In particular we see that $Z_{1,s}(J,u) \lesssim N^{1-s}$. By (\ref{Eqn:InducNrj}), we also have $Z_{0,s} (J,u) \lesssim N^{1-s}$.
Now, by interpolating between $m=0$ and $m=1$, we see
that $Z_{m,s}(J,u) \lesssim N^{1-s}$ also holds  if $m \in (0,1)$.

\section{ Proof of Proposition \ref{Prop:SpatSmooth}}
\label{Sec:SpatSmooth}

In this section we prove Proposition \ref{Prop:SpatSmooth}. \\
First we prove the following lemma

\begin{lem}
We have

\begin{equation}
\begin{array}{ll}
\| P_{\ll N} (\chi_{R_{0}} g) \|_{L^{q}(|x| \geq R_{0}^{'})} & \lesssim_{\infty -} \frac{1}{L^{\infty -}} \| g \|_{L^{q}(|x| \leq R'_{0} )}
\end{array}
\label{Eqn:Eq1Lemma}
\end{equation}

\begin{equation}
\begin{array}{ll}
\| \nabla P_{\ll N} (\chi_{R_{0}} g) \|_{L^{2} (|x| \geq R_{0}^{'})} & \lesssim_{\infty -}
\frac{1}{L^{\infty -}}  \left( \| \nabla g \|_{L^{2} (|x| \leq R'_{0} )} + \| g \|_{L^{2} (|x| \leq R'_{0})}  \right)
\end{array}
\label{Eqn:Eq2Lemma}
\end{equation}

\begin{equation}
M \gtrsim N: \;
\begin{array}{ll}
\| P_{M} I (\chi_{R_{0}} g) \|_{L^{q} (|x| \geq R_{0}^{'} )} & \lesssim_{\infty -} \frac{N^{1-s}}{M^{1-s}} \left( \frac{N}{LM} \right)^{\infty -}
\| g \|_{L^{q} (|x| \leq R'_{0}) }
\end{array}
\label{Eqn:Eq3Lemma}
\end{equation}
and

\begin{equation}
M \gtrsim N: \;
\begin{array}{ll}
\| \nabla P_{M} I (\chi_{R_{0}} g) \|_{L^{2} (|x| \geq R_{0}^{'})} & \lesssim_{\infty -} \frac{N^{1-s}}{M^{1-s}}   \left( \frac{N}{LM} \right)^{\infty -}
\left( \| \nabla g \|_{L^{2}(|x| \leq  R'_{0})} +  \| g \|_{L^{2}(|x| \leq R'_{0})} \right)
\end{array}
\label{Eqn:Eq4Lemma}
\end{equation}

\label{Lem:Lem1}
\end{lem}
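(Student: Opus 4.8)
The four estimates are all \emph{off-diagonal decay} bounds for the Fourier multipliers $P_{<<N}$ and $P_M I$ acting on the spatially localized function $\chi_{R_0} g$, so the plan is to realize each operator as convolution against a kernel that is Schwartz at its natural frequency scale, and then to exploit the geometric gap $R_0' - R_0 \geq L/N$ between the support of $\chi_{R_0} g$ and the region $\{|x| \geq R_0'\}$. First I would record the two kernels. The operator $P_{<<N} = P_{\leq N/128}$ is convolution against a kernel $K_N(x) = c\,N^3 \check{\phi}(cNx)$, where $\check{\phi}$ is Schwartz; thus $K_N$ lives at scale $N^{-1}$ with rapidly decaying tails. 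For $P_M I$ the symbol is $\psi(\xi/M)\, m_N(\xi)$, supported on $|\xi| \sim M$; since the relevant $M$ satisfy $M \gtrsim N$, where $m_N(\xi) = (N/|\xi|)^{1-s}$ is smooth and homogeneous of degree $s-1$, one has $|\partial^\alpha (\psi(\xi/M)\, m_N(\xi))| \lesssim_\alpha (N/M)^{1-s} M^{-|\alpha|}$ on that shell. Consequently the kernel of $P_M I$ is $(N/M)^{1-s}$ times a function that is Schwartz at scale $M^{-1}$.

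Next I would extract the decay. Since $g$ is localized where $\chi_{R_0}$ is relevant and we evaluate at $|x| \geq R_0'$, every kernel argument $x-y$ obeys $|x-y| \geq R_0' - R_0 \geq L/N$. For (\ref{Eqn:Eq1Lemma}) I would apply Young's inequality to the convolution restricted to $\{|z| \geq L/N\}$: its operator norm on $L^q$ is controlled by $\int_{|z| \geq L/N} |K_N(z)|\, dz$, and the substitution $w = cNz$ turns this into $\int_{|w| \geq cL} |\check{\phi}(w)|\, dw \lesssim_M L^{-M}$ for every $M$, by the Schwartz decay of $\check\phi$. This is exactly the $1/L^{\infty-}$ gain of (\ref{Eqn:Eq1Lemma}). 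For (\ref{Eqn:Eq3Lemma}) the identical computation with the $P_M I$ kernel produces the amplitude $(N/M)^{1-s}$ multiplied by $\int_{|w| \geq cLM/N} |\check\psi(w)|\, dw \lesssim_{M'} (N/(LM))^{M'}$, which is the factor $\frac{N^{1-s}}{M^{1-s}} (N/(LM))^{\infty-}$ claimed.

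The two gradient estimates (\ref{Eqn:Eq2Lemma}) and (\ref{Eqn:Eq4Lemma}) I would handle by commuting the derivative onto the input through the product rule, $\nabla(\chi_{R_0} g) = \chi_{R_0} \nabla g + (\nabla \chi_{R_0}) g$, using $\nabla P_{<<N} = P_{<<N} \nabla$ and $\nabla P_M I = P_M I \nabla$. Each of the two resulting pieces is then bounded exactly as in the previous step: the first contributes $\|\nabla g\|_{L^2(|x| \leq R_0)}$, and the second, since $|\nabla \chi_{R_0}| \lesssim R_0^{-1}$, contributes $R_0^{-1} \|g\|_{L^2(|x| \leq R_0)}$, reproducing the right-hand sides of (\ref{Eqn:Eq2Lemma}) and (\ref{Eqn:Eq4Lemma}). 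The hypothesis $R_0 > 1/N^{1-}$ guarantees that the $R_0^{-1}$ prefactor is at worst a harmless power of $N$, absorbed by the rapidly decaying factor.

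The main technical point, and the step I expect to require the most care, is the kernel bound for $P_M I$: one must verify the symbol estimates $|\partial^\alpha(\psi(\xi/M)\, m_N(\xi))| \lesssim_\alpha (N/M)^{1-s} M^{-|\alpha|}$ uniformly in the dyadic parameter $M \gtrsim N$, and then convert them, via repeated integration by parts in the inverse Fourier transform, into the statement that the kernel is $(N/M)^{1-s}$ times a Schwartz function rescaled to frequency $M$, with constants independent of $M$ and $N$. Once this uniform bound is in hand, the extra factor $(N/M)^{1-s}$ makes the later summation over $M \gtrsim N$ in Proposition \ref{Prop:SpatSmooth} converge geometrically, and the off-diagonal gains follow as above. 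A secondary bookkeeping point is to confirm that the separation $R_0' - R_0 \geq L/N$ is the only place $L$ enters, so that it is precisely this gap, measured in units of the kernel scale, that produces the rapidly decaying factors.
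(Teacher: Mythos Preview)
Your proposal is correct and follows essentially the same route as the paper: realize $P_{<<N}$ and $P_M I$ as convolutions against kernels that are Schwartz at scales $N^{-1}$ and $M^{-1}$ respectively, then use that the support of $\chi_{R_0} g$ (which in context is supported in $B(0,R_0)$) and the region $\{|x|\geq R_0'\}$ are separated by at least $L/N$, so only the tails $|z|\geq L/N$ (resp.\ $|z|\geq \frac{LM/N}{M}$) of the kernel contribute. The paper phrases the $L^q$ bound via Minkowski's inequality rather than Young's, and for $P_M I$ with $M\gg N$ it simply writes the symbol as $\frac{N^{1-s}}{M^{1-s}}$ times a bump at scale $M$ instead of carrying out the derivative bounds you outline, but these are cosmetic differences; your more explicit symbol-class verification is a fine way to justify that step.
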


\begin{proof}
We prove (\ref{Eqn:Eq1Lemma}). We have

\begin{equation}
\begin{array}{ll}
\| P_{\ll N} (\chi_{R_{0}} g) \|_{L^{q} (|x| \geq R'_{0})} & \lesssim
\left\| N^{3} \int \check{\phi} (  128 N y ) \chi_{R_{0}}(x-y) g(x-y)     \right\|_{L^{q} (|x| \geq R_{0}^{'})}  \\
& \lesssim_{\infty -} N^{3} \| g \|_{L^{q}(|x| \leq R'_{0})} \int_{|Ny| \geq N \left( R_0' - \left( R_0 + \frac{1}{2} \right) \right)} \frac{1}{| Ny|^{\infty -}} \, dy \\
& \lesssim_{ \infty -} \frac{1}{L^{\infty-}} \| g \|_{L^{q} (|x| \leq R'_{0})},
\end{array}
\label{Eqn:Calc1}
\end{equation}
the third inequality following from Minkowski's inequality, the supports of the functions, and the fast decay
of $\check{\phi}$. \\
The proof of (\ref{Eqn:Eq2Lemma}) is a straightforward modification of the proof of (\ref{Eqn:Eq1Lemma}): it is left to the reader. \\
In order to prove (\ref{Eqn:Eq3Lemma}) we write

\begin{equation}
\begin{array}{ll}
\widehat{P_{M} I f}(\xi) & = \frac{N^{1-s}}{M^{1-s}} \psi \left( \frac{\xi}{M} \right) \hat{f}(\xi), \, M \gg N  \\
\widehat{P_{M} I f}(\xi) & = \tilde{\psi}^{'} \left( \frac{\xi}{N} \right) \hat{f}(\xi), \, M \sim N
\end{array}
\nonumber
\end{equation}
with $\tilde{\psi}^{'}$ being a localized bump function around $|\xi| \sim 1$ (like $\psi$). Next we
follow the same steps, as in (\ref{Eqn:Calc1}), noticing that $R_{0}^{'} - R_{0} \gtrsim  \frac{\frac{LM}{N}}{M}$.
(\ref{Eqn:Eq4Lemma}) follows easily from (\ref{Eqn:Eq3Lemma}).

\end{proof}
Now we prove (\ref{Eqn:SpatSmooth1}) and (\ref{Eqn:SpatSmooth3}). (\ref{Eqn:SpatSmooth3}) is an easy
consequence of (\ref{Eqn:SpatSmooth1}). So it is enough to prove (\ref{Eqn:SpatSmooth1}). We have

\begin{equation}
\begin{array}{ll}
\|  I f \|_{L^{q}(|x| \geq R_{0}^{'})} & \lesssim
\| P_{\ll N} I (\chi_{R_{0}} f) \|_{L^{q}(|x| \geq R_{0}^{'})} + \| P_{\gtrsim N}  I (\chi_{R_{0}} f ) \|_{L^{q} (|x| \geq R_{0}^{'})} \\
& = A_{1} + A_{2} \cdot
\end{array}
\nonumber
\end{equation}
We deal with $A_{1}$. We have

\begin{equation}
\begin{array}{ll}
A_{1} & \lesssim \| P_{\ll N}  (\chi_{R_{0}} P_{\ll N} f) \|_{L^{q} (|x| \geq R_{0}^{'})}
+ \sum_{M \sim N}  \| P_{\ll N } (\chi_{R_{0}} P_{M} f) \|_{L^{q}(|x| \geq R_{0}^{'})} \\
& + \sum_{M \gg N} \| P_{\ll N } (\chi_{R_{0}} \tilde{P}_{M} P_{M} f) \|_{L^{q}(|x| \geq R_{0}^{'})} \\
& = A_{1,1} + A_{1,2} + A_{1,3}
\end{array}
\nonumber
\end{equation}
So by applying Lemma \ref{Lem:Lem1} we see that

\begin{equation}
\begin{array}{ll}
A_{1,1}, \,  A_{1,2} & \lesssim_{\infty -} \frac{1}{L^{\infty-}} \| I f \|_{L^{q}} \cdot
\end{array}
\nonumber
\end{equation}
Next we turn to $A_{1,3}$. The kernel $K_{M}$ of $P_{\ll
 N}(\chi_{R_{0}} \tilde{P}_{M})$ is

\begin{equation}
\begin{array}{ll}
K_{M}(x,y) & := \int \int \phi \left( \frac{128 \xi}{N} \right) \widehat{\chi_{R_{0}}} (\xi -\eta) \tilde{\psi} \left( \frac{\eta}{M} \right)
e^{i \xi \cdot x} e^{-i \eta \cdot y} \, d \xi \, d \eta \cdot
\end{array}
\label{Eqn:DefK}
\end{equation}
We get the pointwise bound

\begin{equation}
\begin{array}{ll}
|K_{M}(x,y)| & \lesssim_{\infty -} \frac{ \langle R_{0} \rangle^{3}}{M^{\infty -}}
\end{array}
\label{Eqn:Cond1}
\end{equation}
On the other hand, by stationary phase in the direction of $\xi$ we have for all $k \in \mathbb{N}$

\begin{equation}
\begin{array}{ll}
|K_{M}(x,y)| & \lesssim_{k, \infty -} \frac{\langle  R_0 \rangle^{3+k} } { M^{\infty -}  |x|^{k}}
\end{array}
\label{Eqn:Cond2}
\end{equation}
and, by stationary phase in the direction of $\eta$ we have

\begin{equation}
\begin{array}{ll}
|K_{M}(x,y)| & \lesssim_{k, \infty-} \frac{ \langle R_0 \rangle^{3+k}}{M^{\infty -}  |y|^{k}}
\end{array}
\label{Eqn:Cond3}
\end{equation}
Hence we see that

\begin{equation}
\begin{array}{ll}
| K_{M} (x,y)| & \lesssim_{\infty -} \frac{\langle R_{0} \rangle^{3}}{M^{\infty-}} \min{ \left( 1, \frac{1}
{ \left( \frac{|x|}{\langle R_0 \rangle} \right)^{3+} } \right)}  \\
|K_{M}(x,y)| & \lesssim_{\infty -} \frac{\langle R_{0} \rangle^{3}}{M^{\infty-}} \min{ \left( 1, \frac{1}
{ \left( \frac{|y|}{\langle R_0 \rangle} \right)^{3+}} \right) }
\end{array}
\label{Eqn:BoundKM}
\end{equation}
and, by Schur's lemma, we have

\begin{equation}
\begin{array}{ll}
A_{1,3} & \lesssim_{\infty-} \sum_{M \gg N} \frac{1}{M ^{\infty -}} \| P_{M} f \|_{L^{q}} \\
& \lesssim_{\infty-} \frac{1}{N^{\infty-}} \| I f \|_{L^{q}} \cdot
\end{array}
\nonumber
\end{equation}
Now we deal with $A_{2}$. We have

\begin{equation}
\begin{array}{ll}
A_{2} & \lesssim \sum_{M_{1} \gtrsim N, M_{2} \gg M_{1}} \| P_{M_{1}} I ( \chi_{R_{0}} \tilde{P}_{M_{2}} P_{M_{2}} f) \|_{L^{q}(|x| \geq R_{0}^{'})} \\
& + \sum_{M_{1} \gtrsim N} \| P_{M_{1}} I (\chi_{R_{0}} P_{\ll N} f ) \|_{L^{q} (|x| \geq R_{0}^{'})} \\
& + \sum_{M_{1} \gtrsim N,  N \lesssim M_{2} \lesssim M_{1} } \| P_{M_{1}} I ( \chi_{R_{0}} P_{M_{2}} f) \|_{L^{q}(|x| \geq R_{0}^{'})} \\
& = A_{2,1} + A_{2,2} + A_{2,3} \cdot
\end{array}
\nonumber
\end{equation}
We are interested in estimating $A_{2,2}$ and $A_{2,3}$. Using
(\ref{Eqn:Eq3Lemma}) we see, after summation, that

\begin{equation}
\begin{array}{ll}
A_{2,2}, A_{2,3} & \lesssim_{\infty -} \frac{1}{L^{\infty -}} \| I f \|_{L^{q}}
\end{array}
\end{equation}
We are interested in estimating $A_{2,1}$. The kernel of the operator $P_{M_{1}} I (\chi_{R_{0}} \tilde{P}_{M_{2}})$ is

\begin{equation}
\begin{array}{ll}
K_{M_{1},M_{2}}(x,y) & = \int \int \psi \left( \frac{\xi}{M_{1}} \right) \frac{N^{1-s}}{|\xi|^{1-s}}
\widehat{\chi_{R_{0}}}(\xi-\eta) \tilde{\psi} \left( \frac{\eta}{M_{2}} \right) e^{i \xi \cdot x} e^{- i \eta \cdot y} \, d \xi
\, d\eta \cdot
\end{array}
\nonumber
\end{equation}
By slightly modifying the steps from (\ref{Eqn:Cond1}) to (\ref{Eqn:Cond3}) we see that

\begin{equation}
\begin{array}{ll}
|K_{M_{1},M_{2}}(x,y)| & \lesssim_{\infty -}  \frac{\langle R_0 \rangle^{3}}{M_{2}^{\infty -}}  \min{\left( 1,
\frac{1}{\left( \frac{|x|}{\langle R_0 \rangle} \right)^{3+}}  \right) }, \\
|K_{M_{1},M_{2}} (x,y)| & \lesssim_{\infty -}
\frac{\langle R_0 \rangle^{3}}{M_{2}^{\infty -}}  \min{\left( 1,
\frac{1}{\left( \frac{|y|}{\langle R_0 \rangle} \right)^{3+}}  \right) }
\end{array}
\nonumber
\end{equation}
and by Schur's lemma

\begin{equation}
\begin{array}{ll}
A_{2,1} & \lesssim_{\infty-} \sum_{M_{1} \gtrsim N, M_{2} \gg M_{1}} \frac{1}{M_{2}^{\infty-}} \| P_{M_{2}} f \|_{L^{q}} \\
& \lesssim_{\infty -} \frac{1}{M_{2}^{\infty -}}  \| I f \|_{L^{q}} \cdot
\end{array}
\nonumber
\end{equation}
We turn to (\ref{Eqn:SpatSmooth2}). We have

\begin{equation}
\begin{array}{ll}
\| \nabla  I f \|_{L^{2} (|x| \geq R_{0}^{'})} &  \lesssim  \| \nabla P_{\ll N} I (\chi_{R_{0}} f) \|_{L^{2} (|x| \geq R_{0}^{'})}
+ \| \nabla P_{\gtrsim N} I (\chi_{R_{0}} f) \|_{L^{2} (|x| \geq R_{0}^{'})} \\
&  = B_{1} + B_{2}
\end{array}
\end{equation}
By decomposition and the boundedness of the Riesz transforms we see that

\begin{equation}
\begin{array}{ll}
\| \nabla  P_{\ll N} I (\chi_{R_{0}} f) \|_{L^{2}(|x| \geq R_{0}^{'})} & \lesssim
\|  \nabla P_{\ll N}  (\chi_{R_{0}} P_{\ll N} f ) \|_{L^{2}( |x| \geq R_{0}^{'})} \\
& +  \sum_{M \sim N} \| \nabla  P_{\ll N}  (\chi_{R_{0}} P_{M} f) \|_{L^{2} (|x| \geq R_{0}^{'})}  \\
& + \sum_{M \gg N} \| D P_{\ll N}  (\chi_{R_{0}} \tilde{P}_{M} P_{M}) \|_{L^{2}} \\
& = B_{1,1} + B_{1,2} + B_{1,3} \cdot
\end{array}
\nonumber
\end{equation}
We are interested in estimating $B_{1,1}$ and $B_{1,2}$. We see from (\ref{Eqn:Eq2Lemma}), Lemma \ref{Lem:Lem1}, that

\begin{equation}
\begin{array}{ll}
B_{1,1} & \lesssim_{\infty -} \frac{1}{L^{\infty-}} \left( \| \nabla I  f  \|_{L^{2}} + \| I f \|_{L^{2}} \right)
\end{array}
\nonumber
\end{equation}
and

\begin{equation}
\begin{array}{ll}
B_{1,2} & \lesssim_{\infty -} \sum_{M \sim N} \frac{1}{L^{\infty -}} \left( \| \nabla P_{M} f  \|_{L^{2}} +  \| P_{M} f \|_{L^{2}} \right)       \\
& \lesssim_{\infty-} \frac{1}{L^{\infty-}} \| \nabla I f \|_{L^{2}} \cdot
\end{array}
\nonumber
\end{equation}
We are interested in estimating $B_{1,3}$. Again, the kernel of the operator $D P_{\ll N} (\chi_{R_{0}} \tilde{P}_{M})$ is given by

\begin{equation}
\begin{array}{ll}
K_{M}(x,y) & := N \int \int \tilde{\phi} \left( \frac{128 \xi}{N} \right) \widehat{\chi_{R_{0}}} (\xi - \eta)
\hat{\psi} \left( \frac{\eta}{M} \right) e^{i \xi \cdot x} e^{- i \eta \cdot y} \, d \xi \, d \eta,
\end{array}
\nonumber
\end{equation}
where $\tilde{\phi}$ is a localized bump function around $B(O,1)$, like $\phi$. By repeating the steps from
(\ref{Eqn:Cond1}) to (\ref{Eqn:Cond3}) we see that (\ref{Eqn:BoundKM}) holds. Therefore, by Schur's lemma

\begin{equation}
\begin{array}{ll}
B_{1,3} & \lesssim_{\infty -} \sum_{M \gg N} \frac{1}{M^{\infty-}} \| P_{M} f \|_{L^{2}} \\
& \lesssim_{\infty -} \frac{1}{N^{\infty-}} \| \nabla I f \|_{L^{2}} \cdot
\end{array}
\nonumber
\end{equation}
Now we deal with $B_{2}$. We have

\begin{equation}
\begin{array}{ll}
\| \nabla P_{\gtrsim N} I (\chi_{R_{0}} f ) \|_{L^{2}(|x| \geq R_{0}^{'} )} & \lesssim  \sum_{M_{1} \gtrsim N, M_{2} \gg M_{1}}
\|D P_{M_{1}} I ( \chi_{R_{0}} \tilde{P}_{M_{2}} P_{M_{2}} f) \|_{L^{2}} \\
& + \sum_{M_{1} \gtrsim N} \| \nabla P_{M_{1}} I (\chi_{R_{0}} P_{ \ll N} f ) \|_{L^{2} (|x| \geq R_{0}^{'})} \\
& + \sum_{M_{1} \gtrsim N,  N \lesssim M_{2} \lesssim M_{1} } \| \nabla P_{M_{1}} I ( \chi_{R_{0}} P_{M_{2}} f) \|_{L^{2}(|x| \geq R_{0}^{'})} \\
& = B_{2,1} + B_{2,2} + B_{2,3} \cdot
\end{array}
\nonumber
\end{equation}
We see from (\ref{Eqn:Eq4Lemma}) that

\begin{equation}
\begin{array}{ll}
B_{2,2}  & \lesssim_{\infty -} \frac{1}{L^{\infty-}} \left( \| \nabla I f \|_{L^{2}} + \| I f \|_{L^{2}}
 \right), \\
B_{2,3} & \lesssim_{\infty -} \frac{1}{L^{\infty -}} \| \nabla I f \|_{L^{2}} \cdot
 \end{array}
\nonumber
\end{equation}
In order to estimate $B_{2,1}$, we follow similar steps to those to estimate $A_{2,1}$. We find

\begin{equation}
\begin{array}{ll}
B_{2,1} & \lesssim_{\infty -} \frac{1}{M_2^{\infty -}} \| \nabla I f \|_{L^{2}} \cdot
\end{array}
\nonumber
\end{equation}

\section{Proof of Proposition \ref{Prop:DecayPot2}}
\label{Sec:FinalDecay}

In this section we prove Proposition \ref{Prop:DecayPot2}.\\
We define the following set

\begin{equation}
\begin{array}{ll}
F_{T} & := \left\{ T^{'} \in [0,T], \,
\begin{array}{l}
\sup_{t \in [0,T^{'}]} E(I_{N_{0}}u(t)) \leq C_{1} N_{0}^{2(1-s)} \\
\int_{|x| \leq t + R'} | I_{N_{0}} u(t,x) |^{p+1} \, dx  \leq C_{2} \frac{R'}{t + R'} N_{0}^{2(1-s)}, t \in [0,T^{'}]
\end{array}
\right\} \cdot
\end{array}
\nonumber
\end{equation}
We claim that $F_{T}=[0,T]$ for some constants $C_{1}:=C_{1}( \| (u_{0},u_{1}) \|_{H^{s} \times H^{s-1}}) >0$,
$C_{2}:= C_{2} (\| (u_{0},u_{1}) \|_{H^{s} \times H^{s-1}}) > 0 $ fixed and large enough such that all the
estimates below are true. Indeed

\begin{itemize}

\item  $F_{T} \neq \emptyset$: indeed $0 \in F_{T}$ by (\ref{Eqn:BoundInitNrj}) and the elementary estimate
$\| I_{N_{0}} u_{0} \|^{p+1}_{L^{p+1}} \lesssim E(I_{N_{0}}u_{0})$,

\item $F_{T}$ is closed by continuity,

\item $F_{T}$ is open. Indeed let $T^{'} \in F_{T}$. Then, by continuity in time, there exists $\delta >0$ such that
for all $\tilde{T} \in (T^{'} - \delta, T^{'} + \delta) \cap [0,T]$  and for all $t \in [0, \tilde{T}]$

\begin{equation}
\begin{array}{ll}
\sup_{t \in [0,\tilde{T}] } E(I_{N_{0}} u(t)) \leq 2 C_{1} N_{0}^{2(1-s)}
\end{array}
\label{Eqn:AprioriEstNrj}
\end{equation}
and

\begin{equation}
\begin{array}{ll}
\int_{|x| \leq t + R'} |I_{N_{0}} u(t,x)|^{p+1} \, dx & \leq 2 C_{2} \frac{R'}{R' + t} N_{0}^{2(1-s)} \cdot
\end{array}
\label{Eqn:AprioriEstPotNrj}
\end{equation}
We aim at proving that in fact

\begin{equation}
\begin{array}{ll}
\sup_{t \in [0,\tilde{T}] } E(I_{N_{0}} u(t)) \leq  C_{1} N_{0}^{2(1-s)}
\end{array}
\label{Eqn:EstNrj}
\end{equation}
and

\begin{equation}
\begin{array}{ll}
\int_{|x| \leq t + R'} |I_{N_{0}} u(t,x)|^{p+1} \, dx & \leq  C_{2} \frac{R'}{R' + t} N_{0}^{2(1-s)}
\end{array}
\label{Eqn:EstPotNrj}
\end{equation}
To this end

\begin{itemize}
\item we divide $[0, \tilde{T}]$  into subintervals $(J_{j}:=[j-1,j])_{ 1 \leq j \leq J }$, except maybe the last one.
\item we  partition each $J_{j}$ into subintervals $(J_{j,k})_{1 \leq  k \leq K}$ such that
$|J_{j,k}| = \tilde{\epsilon}  \left( \frac{j + R'}{R'} \right)^{1-} N_{0}^{\frac{s-1}{1-\theta} \left( (p-1) \theta +2  \right)- }$, with
$\tilde{\epsilon}$ a fixed positive constant such that $\tilde{\epsilon} \ll \epsilon$ ($\epsilon$ is defined in
Proposition \ref{Prop:Boundedness}), except maybe the last one.
\end{itemize}
Notice that there are at most $ \sim_{R'} \langle \tilde{T} \rangle^{+}  N_{0}^{\frac{1-s}{1- \theta}  \left( \theta (p-1) + 2 \right)+ }  $ subintervals $J_{j,k}$. Notice also from
(\ref{Eqn:AprioriEstNrj}), (\ref{Eqn:EstPotNrj}) and Proposition \ref{Prop:Boundedness} that $Z(J_{j,k},u) \lesssim N_{0}^{1-s}$.
Therefore we get after iterating Proposition \ref{Prop:EstIw3} over $j$ and $k$

\begin{equation}
\begin{array}{ll}
\| I_{N_{0}} (|u|^{p-1} u) - |I_{N_{0}}u|^{p-1} I_{N_{0}} u \|_{L_{t}^{1} L_{x}^{2} ([0,\tilde{T}])} & \lesssim
\frac{  \langle \tilde{T} \rangle^{+} N_{0}^{\frac{1-s}{1- \theta}  \left( \theta (p-1) + 2 \right)+ }  N_{0}^{p(1-s)}} {N_{0}^{\frac{5-p}{2}-}}
\end{array}
\label{Eqn:EstDiffIw3}
\end{equation}
and, for all $t \in [0, \tilde{T}]$

\begin{equation}
\begin{array}{ll}
| E(I_{N_{0}}u(t)) - E(I_{N_{0}}u_{0}) | & \lesssim  \| \partial_{t} I_{N_{0}} u \|_{L_{t}^{\infty} L_{x}^{2} ([0,t])}
\| I_{N_{0}} (|u|^{p-1} u) - |I_{N_{0}}u|^{p-1} I_{N_{0}} u \|_{L_{t}^{1} L_{x}^{2} ([0,t])} \\
& \lesssim \frac{  \langle \tilde{T} \rangle^{+} N_{0}^{\frac{1-s}{1- \theta}  \left( \theta (p-1) + 2 \right)+ }  N_{0}^{(p+1)(1-s)} }{N_{0}^{\frac{5-p}{2}-}} \\
& \leq \frac{1}{100} C_{E} N_{0}^{2(1-s)},
\end{array}
\nonumber
\end{equation}
the last inequality following from our choice of $N_{0}$: see (\ref{Eqn:ChoiceN0}). Therefore
(\ref{Eqn:EstNrj}) holds. \\
Next we turn to the proof of (\ref{Eqn:EstPotNrj}). Using (\ref{Eqn:DecayPot}) with $a:=0$ and $b:=\tilde{T}$  we have

\begin{equation}
\begin{array}{ll}
\int_{|x| \leq \tilde{T} + R'} |I_{N_{0}} u(\tilde{T},x)|^{p+1} \, dx & \lesssim \frac{R'}{\tilde{T} + R'} N_{0}^{2(1-s)} +
X_{1} + \sum_{j,k} X_{2,j,k}
\end{array}
\nonumber
\end{equation}
with

\begin{equation}
\begin{array}{ll}
X_{1} & := \frac{1}{\sqrt{2} (\tilde{T}+R')} \int_{ \partial K_{R'}([0,\tilde{T}])}
 \frac{| \nabla I_{N_{0}} u \cdot x + (t+R') \partial_{t} I_{N_{0}} u + I_{N_{0}} u|^{2} }{t+R'} \, d \sigma
\end{array}
\nonumber
\end{equation}
and

\begin{equation}
\begin{array}{ll}
X_{2,j,k} & := \frac{1}{\tilde{T} + R'} \int_{K_{R'}(J_{j,k})}  \left[
\begin{array}{l}
\Re \overline{ \left( (t+R') \partial_{t} I_{N_{0}}u  + x \cdot\nabla I_{N_{0}}u  + I_{N_{0}}u  \right)}  \\
\left( |I_{N_{0}}u|^{p-1} I_{N_{0}} u - I_{N_{0}} (|u|^{p-1}u)  \right)
\end{array}
\right]  dz
\end{array}
\nonumber
\end{equation}
First we estimate $X_{1}$. We write

\begin{equation}
\begin{array}{ll}
X_{1} & \lesssim \int_{\partial K_{R'}([0,\tilde{T}])} \left| \frac{\nabla I_{N_{0}} u \cdot x}{|x|} + \partial_{t} I_{N_{0}} u \right|^{2} \, d \sigma +
\int_{\partial K_{R^{'}} ([0,\tilde{T}])}  \frac{|I_{N_{0}} u|^{2}}{|t + R'|^{2}} \,  d \sigma \\
& \lesssim Flux(I_{N_{0}} u, \partial K_{R'}([0,\tilde{T}])) + Flux^{\frac{2}{p+1}}(I_{N_{0}} u, \partial K_{R'} ([0,\tilde{T}]))
\left( \frac{1}{R'^{\frac{5-p}{p-1}}} - \frac{1}{(\tilde{T}+R')^{\frac{5-p}{p-1}}} \right)^{\frac{1}{2}}
\end{array}
\end{equation}
where we applied H\"older inequality at the last step.

\begin{equation}
\begin{array}{ll}
0 & = \Re \left( \overline{\partial_{t} I_{N_{0}}u} ( \partial_{tt} I_{N_{0}}u - \triangle I_{N_{0}}u - I_{N_{0}} (|u|^{p-1} u)) \right) \\
& =  \Re \left( \overline{\partial_{t} I_{N_{0}}u} ( \partial_{tt} I_{N_{0}}u - \triangle I_{N_{0}}u - |I_{N_{0}}u|^{p-1} I_{N_{0}} u ) \right) +
\Re \left( \overline{\partial_{t} I_{N_{0}}u} ( |I_{N_{0}}u|^{p-1} I_{N_{0}} u - I_{N_{0}} (|u|^{p-1} u)) \right)
\end{array}
\nonumber
\end{equation}
we see that, after integration by part of this identity
on $K^{c}_{R^{'}} ([0,\tilde{T}])$ that

\begin{equation}
\begin{array}{ll}
Flux(I_{N_{0}} u, \partial K_{R'}([0,\tilde{T}])) + E_{R^{'},ext}(I_{N_{0}}u(\tilde{T})) & = E_{R^{'},ext}(I_{N_{0}}u_{0})  \\
&  + \int_{K^{c}_{R^{'}} ([0,\tilde{T}])} \Re \left( \overline{ \partial_{t} I_{N_{0}}u}
\left( I_{N_{0}}(|u|^{p-1}u) - |I_{N_{0}}u|^{p-1} I_{N_{0}} u \right) \right) \, dz \\
& = Z_{1,1} + Z_{1,2} \cdot
\end{array}
\nonumber
\end{equation}
We estimate $Z_{1,1}$ and $Z_{1,2}$. By (\ref{Eqn:SpatSmooth2}) and (\ref{Eqn:EstNrj})
we have

\begin{equation}
\begin{array}{ll}
\| \nabla I_{N_{0}} u_{0} \|_{L^{2}(|x| \geq R^{'})} & \lesssim_{\infty -}
\frac{1}{N_{0}^{\infty-}} \left( \| I_{N_{0}} u_{0} \|_{L^{2}} + \| \nabla I_{N_{0}} u_{0} \|_{L^{2}} \right) \\
& \lesssim_{\infty-} \frac{1}{N_{0}^{\infty -}}
\end{array}
\nonumber
\end{equation}
Similarly, by (\ref{Eqn:SpatSmooth1}), (\ref{Eqn:EstNrj}) and the finite speed of propagation we  have

\begin{equation}
\begin{array}{ll}
\| I_{N_{0}} u_{0} \|_{L^{p+1} (|x| \geq R')} & \lesssim_{\infty -} \frac{1}{N_{0}^{\infty-}}, \\
\| \partial_{t} I_{N_{0}} u(t) \|_{L^{2} (|x| \geq R^{'} + t)} & \lesssim_{\infty-}  \frac{1}{N_{0}^{\infty-}} \cdot
\end{array}
\nonumber
\end{equation}
Therefore

\begin{equation}
\begin{array}{ll}
Z_{1,1} & \lesssim_{\infty-} \frac{1}{N_{0}^{\infty-}}
\end{array}
\nonumber
\end{equation}
and, using also (\ref{Eqn:EstDiffIw3}), we see that

\begin{equation}
\begin{array}{ll}
Z_{1,2} & \lesssim \| \partial_{t} I_{N_{0}} u \|_{L_{t}^{\infty} L_{x}^{2} (K^{c}_{R^{'}} ([0,\tilde{T}]) )}
\| I_{N_{0}} (|u|^{p-1} u) - |I_{N_{0}}u|^{p-1} I_{N_{0}} u \|_{L_{t}^{1} L_{x}^{2}([0,\tilde{T}])} \\
& \lesssim_{\infty-} \frac{1}{N_{0}^{\infty-}}
\end{array}
\end{equation}
We turn to $X_{2,j,k}$. By H\"older inequality and (\ref{Eqn:AprioriEstPotNrj}) we see that, for
$t \in J_{j,k}$
\begin{equation}
\begin{array}{ll}
\| I_{N_{0}} u(t) \|_{L_{x}^{2} (|x| < t + R^{'})}
& \lesssim (t+R')^{\frac{3(p-1)}{2(p+1)}} \| I_{N_{0}} u(t) \|_{L_{x}^{p+1} (|x| < t + R')} \\
& \lesssim (t+R')^{\frac{3(p-1)}{2(p+1)}} N_{0}^{\frac{2(1-s)}{p+1}}
\end{array}
\nonumber
\end{equation}
and, combining this estimate with Proposition \ref{Prop:EstIw3} we see that

\begin{equation}
\begin{array}{ll}
|X_{2,j,k}| & \lesssim
\| ( \partial_{t} I_{N_{0}} u, \nabla I_{N_{0}} u, I_{N_{0}} u ) \|_{ (L_{t}^{\infty} L_{x}^{2} (J_{j,k}))^{3} }
\| I_{N_{0}} ( |u|^{p-1} u) -  |I_{N_{0}}u|^{p-1} I_{N_{0}} u \|_{L_{t}^{1} L_{x}^{2} (J_{j,k})} \\
& \lesssim \frac{N_{0}^{(p+1)(1-s)} } {N_{0}^{\frac{5-p}{2}-}}
\end{array}
\nonumber
\end{equation}
and, after iterating over $j$ and $k$ we get

\begin{equation}
\begin{array}{ll}
\int_{|x| \leq R^{'} + \tilde{T}} |I_{N_{0}} u(\tilde{T},x)|^{p+1} \, dx - C_{pot} \frac{R^{'}}{R^{'} + \tilde{T}} N_{0}^{2(1-s)} &
\lesssim \frac{N_{0}^{(p+1)(1-s)}  \langle \tilde{T} \rangle^{+} N_{0}^{\frac{1-s}{1- \theta}  \left( \theta (p-1) + 2 \right)+ }} {N_{0}^{\frac{5-p}{2}-}} \\
& \ll C_{pot} \frac{R^{'}}{R^{'} + \tilde{T}} N_{0}^{2(1-s)}
\end{array}
\nonumber
\end{equation}
with $C_{pot}$ constant determined by (\ref{Eqn:DecayPot}). So (\ref{Eqn:EstPotNrj}) holds.
\end{itemize}

\section{Proof of Proposition \ref{Prop:EstIw3}}
\label{Sec:EstInt}

In this section we prove Proposition \ref{Prop:EstIw3}.
Let $F(x):=|x|^{p-1} x$. Then

\begin{equation}
\begin{array}{ll}
\| I F(w) -  F(Iw) \|_{L_{t}^{1} L_{x}^{2}(J)} & \lesssim \| IF(w) - F(w) \|_{L_{t}^{1} L_{x}^{2} (J)} +
\| F(w) - F(Iw) \|_{L_{t}^{1} L_{x}^{2} (J)} \\
& = X_{1} + X_{2}
\end{array}
\nonumber
\end{equation}
We estimate $X_{1}$ (see \cite{troynlkgscatt} for a similar argument):

\begin{equation}
\begin{array}{ll}
F (w) & = F(P_{\ll N} w + P_{\gtrsim N} w) \\
& = F(P_{\ll N} w) + \left( \int_{0}^{1} |P_{\ll N} w  + s P_{\gtrsim N} w|^{p-1} \, ds \right) P_{\gtrsim N} w \\
& + \left( \int_{0}^{1} \frac{ P_{\ll N} w + s P_{\gtrsim N} w }{_{\overline{ P_{\ll N} w  + s P_{\gtrsim} w  }} }
|P_{\ll N} w + s P_{\gtrsim N} w|^{p-1} \, ds \right) \overline{P_{\gtrsim N} w} \cdot
\end{array}
\nonumber
\end{equation}
Hence

\begin{equation}
\begin{array}{ll}
X_{1} & \lesssim \| P_{\gtrsim N} F(P_{\ll N} w ) \|_{L_{t}^{1} L_{x}^{2} (J)} +
\| P_{\gtrsim N} w |P_{\ll N} w|^{p-1} \|_{L_{t}^{1} L_{x}^{2} (J)} +
\| |P_{\gtrsim N} w|^{p} \|_{L_{t}^{1} L_{x}^{2} (J)} \\
& = X_{1,1} + X_{1,2} + X_{1,3} \cdot
\end{array}
\nonumber
\end{equation}
We have

\begin{equation}
\begin{array}{ll}
X_{1,1} & \lesssim \frac{1}{N} \| \nabla F(P_{\ll N} w ) \|_{L_{t}^{1} L_{x}^{2} (J)} \\
& \lesssim \frac{1}{N} \| P_{\ll N } w \|^{p-1}_{L_{t}^{\frac{4(p-1)}{7-p}} L_{x}^{\frac{4(p-1)}{p-3}} (J)}
\| \nabla P_{\ll N} w \|_{L_{t}^{\frac{4}{p-3}} L_{x}^{\frac{4}{5-p}} (J)} \\
& \lesssim   \frac{1}{N^{\frac{5-p}{2}-}} |J|^{+}  \| I w  \|^{p-1}_{L_{t}^{\frac{4(p-1)}{7-p}+} L_{x}^{\frac{4(p-1)}{p-3}-} (J)}
\| D^{1- \left( \frac {p-3}{2} \right)} I w \|_{L_{t}^{\frac{4}{p-3}} L_{x}^{\frac{4}{5-p}} (J)} \\
& \lesssim  \frac{|J|^{+} Z^{p}(J,w)}{N^{\frac{5-p}{2}-}} \cdot
\end{array}
\nonumber
\end{equation}
We estimate $X_{1,2}$, $X_{1,3}$ by using similar arguments to estimate $A_{2}$ and $A_{3}$ respectively in the proof of
Proposition \ref{Prop:Boundedness}. We find

\begin{equation}
\begin{array}{ll}
X_{1,2} & \lesssim  \| P_{\ll N} w \|^{p-1}_{L_{t}^{p-1} L_{x}^{\frac{6(p-1)}{p-3}} (J)} \| P_{\gtrsim N} w \|_{L_{t}^{\infty} L_{x}^{\frac{6}{6-p}} (J)} \\
& \lesssim \frac{|J|^{+} Z^{p}(J,w)}{N^{\frac{5-p}{2}-}}
\end{array}
\end{equation}
and

\begin{equation}
\begin{array}{ll}
X_{1,3} & \lesssim  \| P_{\gtrsim N} w \|^{p}_{L_{t}^{p} L_{x}^{2p} (J)} \\
& \lesssim \frac{|J|^{+} Z^{p}_{\frac{3}{2} - \frac{5}{2p}-,s}(J,w)}{N^{\frac{5-p}{2}-}}  \\
& \lesssim \frac{|J|^{+} Z^{p}(J,w)}{N^{\frac{5-p}{2}-}} \cdot
\end{array}
\nonumber
\end{equation}
We turn to $X_{2}$. We write

\begin{equation}
\begin{array}{ll}
F(w) - F(Iw) &= (w- Iw) O \left( |Iw|^{p-1} + |w|^{p-1} \right) \\
& = (w - Iw)  |P_{\ll N} w|^{p-1} + (w-Iw) | P_{\gtrsim N} I w |^{p-1} + (w-Iw) | P_{\gtrsim N}  w|^{p-1} \\
& = Y_{1} + Y_{2} + Y_{3} \cdot
\end{array}
\nonumber
\end{equation}
We estimate $Y_{1}$ by using similar arguments to estimate $X_{1,2}$ and we estimate $Y_{2}$ and $Y_{3}$
by using similar arguments to estimate $X_{1,3}$. We find

\begin{equation}
\begin{array}{ll}
X_{2} & \lesssim \frac{ |J|^{+} Z^{p}(J,w)}{N^{\frac{5-p}{2}-}} \cdot
\end{array}
\end{equation}
So we proved (\ref{Eqn:EstIw3}).

\end{document}